\documentclass[12pt]{amsart}
\usepackage{a4}
\usepackage{booktabs}
\usepackage[T1]{fontenc}
\usepackage[all]{xy}
\usepackage{lipsum}
\usepackage{url}
\usepackage{tikz-network}
\usepackage{stackrel}
\usepackage{color}

\usepackage{amsmath,amsthm,amssymb}

\newcommand{\aspas}[1]{``{#1}''}
\usepackage{xcolor}
\usepackage{graphicx}

\usepackage{tikz}
\usetikzlibrary{
  hobby,
  intersections,
  spath3,
  decorations.markings,
  arrows.meta,
}

\newtheorem{theorem}{Theorem}[section]
\newtheorem{lemma}[theorem]{Lemma}
\newtheorem{example}[theorem]{Example}
\newtheorem{proposition}[theorem]{Proposition}
\theoremstyle{definition}
\newtheorem{definition}[theorem]{Definition}

\newtheorem{remark}[theorem]{Remark}
\newtheorem{corollary}[theorem]{Corollary}
\numberwithin{equation}{section}

\begin{document}


\renewcommand{\bf}{\bfseries}
\renewcommand{\sc}{\scshape}

\title[Equivariant category and Topological complexity of wedges]%
{Equivariant category and Topological complexity of wedges \\ }

\author{Cesar A. Ipanaque Zapata}
\address{Departamento de Matem\'atica, Universidade de S\~ao Paulo, Instituto de Matemá\-tica e Estatística, IME-USP, 
Rua do Mat\~ao 1010, CEP: 05508-090, S\~ao Paulo-SP, Brasil.} 
\email{cesarzapata@usp.br}
\thanks{The first author would like to thank grant \#2023/16525-7, \#2022/16695-7, \#2018/23678-6 and \#2016/18714-8, S\~ao Paulo Research Foundation (FAPESP)}

\author{Denise de Mattos}
\address{Departamento de Matem\'{a}tica, Universidade de São Paulo, Instituto de Ciências  
Matemáticas e de Computação, ICMC-USP, Avenida Traba\-lhador S\~{a}o-carlense, 400, Centro, CEP:
13566-590, S\~{a}o Carlos - SP, Brasil.}
\email{deniseml@icmc.usp.br}

\subjclass[2020]{Primary 55M30, 57S10; Secondary 55P92.} 

\keywords{(Equivariant) Lusternik-Schnirelmann category, equivariant and invariant topological complexities, $G$-spaces, wedge product, smash product.}

\begin{abstract} We prove the formula \begin{equation*}
    \text{cat}_G(X\vee Y)=\max\{\text{cat}_G(X),\text{cat}_G(Y)\}
\end{equation*} for the equivariant category of the wedge $X\vee Y$. As a direct application, we have that the wedge $\bigvee_{i=1}^m X_i$ is $G$-contractible if and only if each $X_i$ is $G$-contractible, for each $i=1,\ldots,m$. One further application is to compute the equivariant category of the quotient $X/A$, for a $G$-space $X$ and an invariant subset $A$ such that the inclusion $A\hookrightarrow X$ is $G$-homotopic to a constant map $\overline{x_0}:A\to X$, for some $x_0\in X^G$. Additionally, we discuss the equivariant and invariant topological complexities for wedges. For instance, as applications of our results, we obtain the following equalities:  \begin{align*}
     \text{TC}_G(X\vee Y)&=\max\{\text{TC}_G(X),\text{TC}_G(Y),\text{cat}_G(X\times Y)\},\\ 
     \text{TC}^G(X\vee Y)&=\max\{\text{TC}^G(X),\text{TC}^G(Y),_{X\vee Y}\text{cat}_{G\times G}(X\times Y)\},
    \end{align*} for $G$-connected $G$-CW-complexes $X$ and $Y$ under certain conditions.       
\end{abstract}

\maketitle


\section{Introduction}
In this paper, \aspas{space} refers to a topological space, and by a \aspas{map} we always mean a continuous map. Fibrations are taken in the Hurewicz sense, meaning that they satisfy the homotopy lifting property for maps. Additionally, a $G$-fibration is one that satisfies the homotopy lifting property for $G$-maps.  

\medskip Let $G$ be a compact Hausdorff topological group acting continuously on a Hausdorff space $X$ on the left. The equivariant category of $X$, denoted by $\text{cat}_G(X)$, was introduced by Fadell in \cite{fadell1985}, as a generalization of the classical category, or Lusternik-Schnirelmann category (LS-category), of a space \cite{lyusternik1947topological}. For a gene\-ral overview of LS-category, we refer the reader to the survey article by James \cite{james1978category} and the book by Cornea-Lupton-Oprea-Tanré \cite{cornea2003lusternik}. In \cite[Proposition 1.27(2), p. 14]{cornea2003lusternik} the authors show that for path-connected normal spaces $X,Y$ with non-degenerate basepoints, the classical category of the wedge $X\vee Y$ is,
\begin{equation}\label{cat-wedge-nonequi}
   \mathrm{cat}(X\vee Y) = \max\{\mathrm{cat}(X), \mathrm{cat}(Y)\}, 
\end{equation} where $X\vee Y$ is the wedge of the disjoint sets $X$ and $Y$ obtained by identifying their basepoints.

\medskip Similar to the definition of classical category, $\mathrm{cat}_G(X)$ is defined as the least number of open invariant subsets of $X$ that form a covering for $X$, with each open subset being equivariantly contractible to an orbit, rather than a point. 

\medskip We note that a formula for the equivariant case of (\ref{cat-wedge-nonequi}) does not appear in the literature. In \cite[Lemma 2.7, p. 135]{bayeh2015}, the authors show that the inequality \begin{equation}
 \label{ineq}   \mathrm{cat}_G(X\vee Y)\leq \mathrm{cat}_G(X)+\mathrm{cat}_G(Y)-1
\end{equation} holds for any pointed $G$-spaces $(X,x_0)$ and $(Y,y_0)$. A pointed $G$-space means a $G$-space with a distinguished basepoint fixed by $G$.    

\medskip In this paper, we extend the classical formula~(\ref{cat-wedge-nonequi}) to the equivariant case, thus improving the inequality~(\ref{ineq}). Specifically, we show that under certain conditions (see Theorem \ref{principal}), the wedge $X\vee Y$ has equivariant category equal to \[\max\{\mathrm{cat}_G(X), \mathrm{cat}_G(Y)\}.\]

\medskip To this end, we extend the approach to LS-category given by Cornea-Lupton-Oprea-Tanré in \cite[Section 1.4]{cornea2003lusternik} to  the equivariant setting. 

Additionally, we discuss the \textit{equivariant} and \textit{invariant topological complexities} for wedge spaces. 

\medskip In simpler terms, the equivariant topological complexity $\mathrm{TC}_G$ is the minimal number of $G$-equivariant motion planning rules, whereas the invariant topological complexity $\mathrm{TC}^G$ is the minimal number of $G \times G$-equivariant motion planning rules required to navigate the space in a way that respects the group symmetry (see Definition~\ref{defn:equiv-inv-tc}).

\medskip The main results of this work are the following.
\begin{itemize}
\item We prove the formula $\text{cat}_G(X\vee Y)=\max\{\mathrm{cat}_G(X), \mathrm{cat}_G(Y)\}$ (Theorem \ref{principal}). 
    \item We show that the wedge $\bigvee_{i=1}^m X_i$ is $G$-contractible if and only if each $X_i$ is $G$-contractible, for each $i=1,\ldots,m$ (see Corollary~\ref{cor-wedge}). 
    \item We compute the equivariant category of the quotient $X/A$ for a $G$-space $X$ and an invariant subset $A$ such that the inclusion $A\hookrightarrow X$ is $G$-homotopic to a constant map $\overline{x_0}:A\to X$, for some $x_0\in X^G$ (Theorem~\ref{prop:equiv-quotient}).
    \item We compute the equivariant category $\text{cat}_G(X\wedge S^k)$ (Theorem~\ref{thm:equiv-smash}).
    \item We show that the inequalities \begin{align*}
       \max\{\text{TC}_G(X),\text{TC}_G(Y),\text{cat}_G(X\times Y)\}&\leq \text{TC}_G(X\vee Y) \quad \text{ and }\\ 
       \max\{\text{TC}^G(X),\text{TC}^G(Y),_{X\vee Y}\text{cat}_{G\times G}(X\times Y)\}&\leq \text{TC}^G(X\vee Y)
    \end{align*}  always hold (Theorem~\ref{thm:lower-bound-tc-wedge}).
    \item Under certain conditions, we obtain the following equalities: \begin{align*}
        \text{TC}_G(X\vee Y)&=\max\{\text{TC}_G(X),\text{TC}_G(Y),\text{cat}_G(X\times Y)\} \quad \text{ and }\\
        \text{TC}^G(X\vee Y)&=\max\{\text{TC}^G(X),\text{TC}^G(Y),_{X\vee Y}\text{cat}_{G\times G}(X\times Y)\} 
    \end{align*} (Theorem~\ref{prop:tc-2cat-1}).
\end{itemize} 

\medskip The paper is organized as follows: In Section~\ref{sec:pre}, we begin with a brief review of $G$-spaces, $G$-maps, $G$-homotopy, and $G$-invariant sets. Basic properties of inva\-riant sets are presented in Lemma~\ref{lem:basic-prop-inv}. We recall the definition of equivariant cate\-gory in Definition~\ref{defn:equiv-cat}. Some computation of equivariant category are shown in Example~\ref{exam:1-2}. Lemma~\ref{lem:g-contractible-connectivity} yields necessary conditions for $G$-contractiblity. The $G$-homotopy invariance of the equivariant category is presented in Proposition~\ref{prop-G-domina}. A key property of the equivariant category is presented in Lemma~\ref{conservation}. We introduce the notion of $y$-connectivity for a $G$-space in Definition~\ref{defn:y-connected}. As shown in Remark~\ref{rem:g-contractil-connected}, $G$-contractibility implies $y$-connectivity and, with an additional condition, $G$-connectivity. In Example~\ref{acao-hamil}, we use the $y$-connectivity property to compute the equivariant category of the standard Hamiltonian action. Proposition~\ref{o-cat-leq-equicat} shows that, under $y$-connectivity, the inclusion map of any $G$-categorical set is $G$-homotopic to a $G$-map with values in the orbit of $y$. We recall the definition of $G$-cofibration and $G$-retract in Definition~\ref{defn:g-cofibration}. Proposition~\ref{caracterizacao-Gcofibration} provides a characterization of $G$-cofibrations in terms of $G$-retracts, and thus we obtain a key property of $G$-cofibrations. We recall the notion of a $G$-well-pointed space in Definition~\ref{defn:g-well-pointed}. Proposition~\ref{G-ponto-base-nao-degenerado} shows that any $G$-non-degenerate basepoint admits a \aspas{good} open categorical neighborhood. In Definition~\ref{defn:g-normal}, we recall the notion of a $G$-normal space. Proposition~\ref{prop:normal-g-g-normal} shows that any normal $G$-space is a $G$-normal space. Proposition~\ref{primera-prop-G-normal} presents properties of $G$-normal spaces. 

In Section~\ref{sec:equiv-wedges}, we present our first main result (Theorem~\ref{principal}). For this purpose, in Proposition~\ref{G-categorico-baseado}, we show that for any $G$-connected, $G$-normal space with a $G$-non-degenerate basepoint, there exists a \aspas{good} open $G$-categorical cover. Such a \aspas{good} open $G$-categorical cover is called a based $G$-categorical covering (Definition~\ref{defn-based-g-cat}). The inequality $\max\{\text{cat}_G(X),\text{cat}_G(Y)\}\leq \text{cat}_G(X\vee Y)$ always holds, as shown in Proposition~\ref{prop:lower-bound-equiv-wedge}. Our first main theorem (Theorem~\ref{principal}) shows that this inequality is, in fact, an equality under certain conditions. Example~\ref{equiv-cat-x-x} shows that the equality $\text{cat}_G(X\vee X)=\text{cat}_G(X)$ holds. A characterization of the $G$-contractibility of the wedge $\bigvee_{i=1}^m X_i$ is shown in Corollary~\ref{cor-wedge}. The second main result in this section is Theorem~\ref{prop:equiv-quotient}, which computes the equivariant category of the quotient $X/A$ for a $G$-space $X$ and an invariant subset $A$ such that the inclusion $A\hookrightarrow X$ is $G$-homotopic to a constant map $\overline{x_0}:A\to X$, for some $x_0\in X^G$. Remark~\ref{rem:product-action} shows that we can consider more general product actions. Hence, we obtain Proposition~\ref{prop:equiv-cat-product} and Remark~\ref{rem:prod-lower-bound}. We end this section by discussing the equivariant category of the smash product (Remark~\ref{rem:smash-pro}). For instance, we compute the equivariant category $\text{cat}_G(X\wedge S^k)$ in Theorem~\ref{thm:equiv-smash}.

In Section~\ref{sec:equiv-inv-tc}, we discuss the equivariant and invariant topological complexity of a wedge. We recall the notions of equivariant and invariant topological complexities in Definition~\ref{defn:equiv-inv-tc}. In Lemma~\ref{lem:equi-inv-prop}, we record standard properties of these invariants. In Proposition~\ref{prop:quaotient-map-equivalence}, we compute the equivariant category, and the equivariant and invariant topological complexities of $X/A$ under the condition that $A$ is $G$-contractible to a fixed point $x_0\in X^G$. The first main theorem in this section is Theorem~\ref{thm:lower-bound-tc-wedge}, which shows that the inequalities \begin{align*}
  \max\{\text{TC}_G(X),\text{TC}_G(Y),\text{cat}_G(X\times Y)\}&\leq \text{TC}_G(X\vee Y) \quad \text{ and }\\
  \max\{\text{TC}^G(X),\text{TC}^G(Y),_{X\vee Y}\text{cat}_{G\times G}(X\times Y)\}&\leq \text{TC}^G(X\vee Y)
\end{align*}  always hold. Under certain conditions, we show that these inequalities are, in fact, equalities, as shown in Theorem~\ref{prop:tc-2cat-1}. For instance, in Proposition~\ref{prop:equiv-inv-tc-wedges-x-x}, we obtain that the equality \[\text{TC}_G(\underbrace{X\vee\cdots\vee X}_{k \text{ times }})=\text{TC}_G(X)\] holds, whenever $\text{TC}_G(X)=2\text{cat}_G(X)-1$. In addition, the equality \[\text{TC}^G(\underbrace{X\vee\cdots\vee X}_{k \text{ times }})=\text{TC}^G(X)\] holds, whenever $\text{TC}^G(X)=2\text{cat}_G(X)-1$. Several examples are presented to support these results (Example~\ref{exam:sphere}, Example~\ref{exam:x-sphere}, Example~\ref{exam:equiv-inv-x-a}, and Example~\ref{exam:equiv-inv-tc-smash}). We end this article with Remark~\ref{rem:conj}, which conjectures that the equalities \begin{align*}
    \text{TC}_G(X\vee Y)&=\max\{\text{TC}_G(X),\text{TC}_G(Y),\text{cat}_G(X\times Y)\},\\
    \text{TC}^G(X\vee Y)&=\max\{\text{TC}^G(X),\text{TC}^G(Y),_{X\vee Y}\text{cat}_{G\times G}(X\times Y)\},
\end{align*} always hold.  

\medskip The authors of this paper deeply thank the referees for very valuable comments and timely corrections on previous versions of the work.

\section{Preliminary results}\label{sec:pre}

In this section, we recall some definitions and fix our notation. We follow the standard conventions for compact transformation groups as used in \cite{bredon1976introduction} and \cite{tom1987transformation}.

Let $G$ be a compact Hausdorff topological group acting continuously on a Hausdorff space $X$ on the left. In this case, we say that $X$ is a \textit{$G$-space}.

For each $x \in X$, the {\emph{isotropy group}} (or {\emph{stabilizer}}) of $x$, denoted
\[
G_x := \{g \in G : gx = x\},
\]
is a closed subgroup of $G$.

The set
\[
{O}(x) := \{gx : g \in G\},
\]
also denoted by $Gx$, is called the {\emph{orbit}} of $x$.

\medskip There exists a homeomorphism from the quotient space $G/G_x$ to ${O}(x)$, which maps $gG_x$ to $g^{-1}x$, for each $g \in G$.

\medskip The \textit{orbit space} $X/G$ is the set of equivalence classes determined by the action, endowed with the quotient topology. Since $G$ is compact and $X$ is Hausdorff, the orbit space $X/G$ is also Hausdorff \cite[Proposition 3.1-(v), p. 5]{tom1987transformation}, and the \textit{quotient map} $p:X\to X/G$, sending each point to its orbit, is both open and closed \cite[Proposition 3.1, p. 22]{tom1987transformation}.

\medskip If $H$ is a closed subgroup of $G$, then the set 
\[
X^H:=\{x\in X:~hx=x, \text{ for all } h\in H\}
\]
is called the \textit{$H$-fixed point set} of $X$. In particular, the set $X^G$ is called the \textit{fixed point set} of $X$. Here, $X^H$ is endowed with the subspace topology. Note that $x\in X^G$ if and only if $O(x)=\{x\}$, which is equivalent to $G_x=G$. 

\medskip We denote the closed interval $[0,1]$ in $\mathbb{R}$ by $I$. Let $X$ and $Y$ be $G$-spaces. A \textit{$G$-equivariant map} or (a \textit{$G$-map}) $f:X\to Y$ is a map that satisfies $f(gx)=gf(x)$, for any $g\in G$ and $x\in X$. 

\medskip Let $X$ and $Y$ be $G$-spaces. Two $G$-equivariant maps $\phi,\psi:X\to Y$ are \textit{$G$-homotopic}, written $\phi\simeq_G\psi$, if there exists a $G$-map $F:X\times [0,1]\to Y$ such that $F_0=\phi$ and $F_1=\psi$, where $G$ acts trivially on $I=[0,1]$ (i.e., $gt=t$, for all  $g\in G$ and $t\in I$) and diagonally on $X\times [0,1]$ (i.e., $g(x,t)=(gx,gt)=(gx,t)$, for all $g\in G$ and $(x,t)\in X\times [0,1]$). A subset $U\subset X$ is described as \textit{$G$-invariant} (or simply \textit{invariant}) if $gU\subset U$, for all $g\in G$.

\medskip The following statement is straightforward to verify.

\begin{lemma}\label{lem:basic-prop-inv}\rm{
Let $X$ be a $G$-space. If $U\subset X$ is an invariant subset, then the closure $\overline{U}$ and the complements $ X-\overline{U}, X-U\subset X$ are also invariant. Furthermore, if $\{U_\alpha\}$ is a collection of invariant subsets of $X$, then the intersection $\bigcap U_\alpha$ and the complement $X-\bigcup_{\alpha}U_\alpha$ are also invariant.}
\end{lemma}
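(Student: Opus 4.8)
The plan is to prove each of the four claims by a direct verification, unwinding the definition of invariance. Recall that $U$ is invariant iff $gU \subseteq U$ for all $g \in G$. The only nontrivial structural fact I expect to lean on is that for a compact group $G$ acting on a Hausdorff $X$, each $g \in G$ acts as a homeomorphism of $X$; in particular $x \mapsto gx$ is a continuous open map whose inverse is the action of $g^{-1}$. A useful preliminary observation is that invariance of $U$ is actually equivalent to the \emph{equality} $gU = U$ for all $g$, since $gU \subseteq U$ applied to $g^{-1}$ gives $g^{-1}U \subseteq U$, hence $U = g(g^{-1}U) \subseteq gU$; I would state this reformulation first because it streamlines every subsequent step.

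First I would handle the closure $\overline{U}$. Since each $g \in G$ acts as a homeomorphism, it sends closed sets to closed sets and commutes with closure: $g\overline{U} = \overline{gU} = \overline{U}$, using the equality $gU = U$ from the reformulation. Thus $\overline{U}$ is invariant. Next I would treat complements. For a single invariant set $U$, I claim $g(X - U) = X - U$: indeed $g(X - U) = gX - gU = X - U$, where $gX = X$ because $g$ is a bijection of $X$ and $gU = U$ by invariance. This simultaneously covers $X - U$ and, by applying it to the already-established invariant set $\overline{U}$, also covers $X - \overline{U}$.

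For the final statement about an arbitrary collection $\{U_\alpha\}$, I would first note that the union $\bigcup_\alpha U_\alpha$ is invariant: for any $g$, $g\bigl(\bigcup_\alpha U_\alpha\bigr) = \bigcup_\alpha gU_\alpha = \bigcup_\alpha U_\alpha$, since $g$ distributes over unions and each $gU_\alpha = U_\alpha$. Then the complement $X - \bigcup_\alpha U_\alpha$ is invariant by the complement argument already proved, applied to the invariant set $\bigcup_\alpha U_\alpha$. This modular structure means the only genuinely new ingredients are the two facts that a group element acts bijectively on $X$ and commutes with closure, after which every claim reduces to the elementary set identities above.

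I do not anticipate a serious obstacle here, as the lemma is essentially bookkeeping; the author even flags it as ``easy to verify.'' The one point requiring mild care is making sure the homeomorphism property of each $g$ is invoked correctly when asserting $g\overline{U} = \overline{gU}$ and $gX = X$; both follow from continuity of the action together with the existence of a continuous inverse given by $g^{-1}$, which is exactly where compactness of $G$ and Hausdorffness of $X$ enter the standard transformation-group setup.
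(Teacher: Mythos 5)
Your proof is correct and complete; the paper itself gives no argument for this lemma, simply declaring it easy to verify, so your direct unwinding of the definition (the reformulation $gU=U$, the fact that each translation is a homeomorphism, and the elementary set identities for complements, closures, and unions) is exactly the intended verification. One small correction: the fact that each $g$ acts as a homeomorphism of $X$ needs only continuity of the action together with the inverse translation by $g^{-1}$; compactness of $G$ and Hausdorffness of $X$ play no role in that step and are instead used elsewhere in the paper (e.g., to guarantee that the orbit map is closed and the orbit space is Hausdorff).
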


We recall the notion of the equivariant category of a $G$-space $X$, as studied, for instance, by Marzantowicz \cite{marzantowicz1989g} and  Collman-Grant \cite{colman2013equivariant}.

\medskip An invariant set $U$ in a $G$-space $X$ is called \textit{$G$-categorical} if the inclusion map $\text{incl}_U:U\hookrightarrow X$ is $G$-homotopic to a $G$-map $c:U\to X$ such that $c(U)\subset O(x)$, for some $x\in X$. That is, there exists a $G$-homotopy $F:U\times [0,1]\to X$ such that $F_0=\text{incl}_U$ and $F_1(U)\subset O(x)$, for some $x\in X$.

\medskip Now, we recall the definition of the equivariant category. 

\begin{definition}\label{defn:equiv-cat}
The \textit{equivariant category} of a $G$-space $X$, denoted $\text{cat}_G(X)$, is the least integer $k\geq 1$ such that $X$ can be covered by $k$ open sets $\{U_1,\ldots,U_k\}$, each of which is $G$-categorical. 
\end{definition}

Note that $\text{cat}_G(X)\geq \text{cat}(X/G)$ (using the fact that the quotient map $X\to X/G$ is open). The equality holds whenever $X$ is a free metrizable $G$-space (see \cite[Proposition 3.5, p. 2304]{colman2013equivariant}).

\medskip A $G$-space $X$ is called \textit{$G$-contractible} if $\text{cat}_G(X)=1$, i.e., the identity map $1_X:X\to X$ is $G$-homotopic to a $G$-map $c:X\to X$ such that $c(X)\subset O(x)$, for some $x\in X$. 

\begin{example}\label{exam:1-2}\rm{
\noindent 
\begin{enumerate}
    \item[(1)] If $G$ acts \textit{transitively} on $X$, i.e., $O(x)=X$, for some (and thus for any) $x\in X$, then $\text{cat}_G(X)=1$.

\item[(2)] Given a $G$-space $X$, the \textit{cone} $C(X)$ of $X$, defined as the quotient of $X \times [0,1]$ obtained by collapsing $X \times \{1\}$ to a single point, is a $G$-space, with the action defined by $g[x, t] := [gx, t]$. Moreover, the point $X \times \{1\}$ belongs to the fixed point set $(C(X))^G$. Note that $\operatorname{cat}_G(C(X)) = 1$ for any $G$-space $X$. The $G$-map $H : C(X) \times [0,1] \to C(X)$, given by
\[
H([x,t],s) = [x, (1 - s)t + s],
\]
satisfies $H_0 = \operatorname{id}_{C(X)}$ and $H_1$ is the constant map at the point $X \times \{1\}$.
\item[(3)] Given a $G$-space $X$, the \textit{suspension} $SX$ of $X$, defined as the quotient of $X \times [0,1]$ obtained by collapsing $X \times \{0\}$ to one point and $X \times \{1\}$ to another, is itself a $G$-space, with the action given by $g[x,t] := [gx,t]$. Moreover, the points $X \times \{0\}$ and $X \times \{1\}$ belong to the fixed point set $(SX)^G$. Note that $\operatorname{cat}_G(SX) \leq 2$ for any $G$-space $X$. Indeed, an open $G$-categorical covering is given by
\[
\left\{ q\left(X \times [0, 3/4)\right),\; q\left(X \times (1/4, 1]\right) \right\},
\]
where $q : X \times [0,1] \to SX$ is the quotient map. 
\end{enumerate}
}
\end{example}

The $G$-contractibility condition yields the following statement.

\begin{lemma}\label{lem:g-contractible-connectivity}\rm{
Let $X$ be a $G$-contractible space. 
\begin{enumerate}
\item[(1)] There exists a point $a\in X$ such that for each $x\in X$, there is a continuous path $\alpha:[0,1]\to X^{G_x}$ such that $\alpha(0)=x$, and $\alpha(1)\in Ga$. 
\item[(2)] Given $x, y \in X$, there exist continuous paths $\alpha : [0,1] \to X^{G_x}$ and \linebreak $\beta : [0,1] \to X^{G_y}$ such that $\alpha(0) = x$, $\beta(1) = y$, and $G\alpha(1) = G\beta(0)$.
 \item[(3)] Let $H$ be a closed subgroup of $G$. Given $x,y\in X^H$, there exist continuous paths $\alpha,\beta:[0,1]\to X^H$ such that $\alpha(0)=x$, $\beta(1)=y$, and $G\alpha(1)=G\beta(0)$. 
     \item[(4)] The orbit space $X/G$ is path-connected. 
\end{enumerate}}
\end{lemma}
\begin{proof}
    Let $F:X\times [0,1]\to X$ be a $G$-homotopy such that $F_0=1_X$ and $F_1(X)\subset Ga$, for some $a\in X$. Given $x\in X$, consider the continuous path $\alpha:[0,1]\to X$ defined by \[\alpha(t)=F(x,t).\] Note that $\alpha(0)=x$ and $\alpha(1)\in Ga$. Additionally, since $F$ is a $G$-map, it follows that $\alpha(t)\in X^{G_x}$, for any $t\in [0,1]$. Similarly, given $y\in X$, then is a continuous path $\gamma:[0,1]\to X^{G_y}$ such that $\gamma(0)=y$ and $\gamma(1)\in Ga$. Consider the path $\beta(t)=\gamma(1-t)$. Note that $\beta(t)\in X^{G_y}$, for any $t\in [0,1]$, $\beta(1)=\gamma(0)=y$, and $G\beta(0)=G\gamma(1)=Ga=G\alpha(1)$. 
    
    Furthermore, if $x\in X^H$, then $H\subset G_x$, and thus $X^{G_x}\subset X^H$, which implies that $\alpha(t)\in X^H$, for all $t\in [0,1]$. 
\end{proof}

Let $X$ and $Y$ be $G$-spaces. We say that $X$ \textit{$G$-dominates $Y$} if there exist $G$-maps $\phi:X\to Y$ and $\psi:Y\to X$ such that $\phi\circ\psi\simeq_G 1_Y$. If, in addition, $\psi\circ\phi\simeq_G 1_X$, then $\phi$ and $\psi$ are \textit{$G$-homotopy equivalences,} and $X$ and $Y$ are \textit{$G$-homotopy equivalent}, written $X\simeq_G Y$.

\medskip The following result is well-known and was established in \cite{marzantowicz1989g}.

\begin{proposition}\label{prop-G-domina}\rm{
If $X$ $G$-dominates $Y$, then $\text{cat}_G(X)\geq \text{cat}_G(Y)$. In particular, if $X\simeq_G Y$, then $\text{cat}_G(X)=\text{cat}_G(Y)$.}
\end{proposition}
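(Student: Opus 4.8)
The plan is to transport a minimal $G$-categorical cover of $X$ to $Y$ by pulling it back along the $G$-map $\psi$. First I would dispose of the trivial case: if $cat_G(X)=\infty$ there is nothing to prove, so assume $cat_G(X)=k<\infty$ and fix open invariant $G$-categorical sets $U_1,\ldots,U_k$ covering $X$, together with $G$-maps $\phi\colon X\to Y$ and $\psi\colon Y\to X$ and a $G$-homotopy $H\colon Y\times I\to Y$ with $H_0=\phi\psi$ and $H_1=id_Y$. For each $i$ put $V_i:=\psi^{-1}(U_i)$. Because $\psi$ is continuous each $V_i$ is open; because $\psi$ is equivariant and $U_i$ is invariant each $V_i$ is invariant, since $y\in V_i$ and $g\in G$ give $\psi(gy)=g\psi(y)\in gU_i\subseteq U_i$; and since the $U_i$ cover $X$ and $\psi(y)$ lands in some $U_i$, the sets $V_1,\ldots,V_k$ cover $Y$.

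The crux is to show that each $V_i$ is $G$-categorical. Note that $\psi$ restricts to a $G$-map $\psi_i\colon V_i\to U_i$ satisfying $i_{U_i}\circ\psi_i=\psi|_{V_i}$. Since $U_i$ is $G$-categorical, there is a $G$-homotopy $F^{(i)}\colon U_i\times I\to X$ from $i_{U_i}$ to a $G$-map $c_i$ with $c_i(U_i)\subseteq O(x_i)$ for some $x_i\in X$. I would then assemble three $G$-homotopies. Restricting $H$ to the invariant set $V_i\times I$ gives $i_{V_i}=id_Y|_{V_i}\simeq_G (\phi\psi)|_{V_i}$. Next, $(\phi\psi)|_{V_i}=\phi\circ i_{U_i}\circ\psi_i\simeq_G \phi\circ c_i\circ\psi_i$ via $\phi\circ F^{(i)}\circ(\psi_i\times id_I)$. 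Finally, the image of $\phi\circ c_i\circ\psi_i$ lies in $\phi(O(x_i))$, and because $\phi$ is equivariant $\phi(gx_i)=g\phi(x_i)$, so $\phi(O(x_i))\subseteq O(\phi(x_i))$ is contained in a single orbit of $Y$. Concatenating these $G$-homotopies exhibits $i_{V_i}$ as $G$-homotopic to a $G$-map whose image lies in one orbit, so $V_i$ is $G$-categorical.

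This produces a cover of $Y$ by $k$ open invariant $G$-categorical sets, whence $cat_G(Y)\leq k=cat_G(X)$. For the final assertion, if $X\simeq_G Y$ then $X$ $G$-dominates $Y$ and $Y$ $G$-dominates $X$, so the inequality applies in both directions and yields $cat_G(X)=cat_G(Y)$.

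I expect the main obstacle to be the second paragraph, specifically the careful bookkeeping of concatenating the three $G$-homotopies on $V_i\times I$ and the verification that an equivariant map carries an orbit into an orbit; the openness, invariance, and covering properties of the $V_i$, together with the final symmetry argument, are routine once that step is in place.
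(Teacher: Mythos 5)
Your proposal is correct and follows essentially the same argument as the paper: pull back a minimal $G$-categorical cover along $\psi$, check that each $V_i=\psi^{-1}(U_i)$ is open and invariant, and concatenate the domination homotopy with $\phi\circ F^{(i)}\circ(\psi_i\times id_I)$ to contract $V_i$ into the orbit $O(\phi(x_i))$. The only differences are cosmetic (you make the reversal of the domination homotopy and the observation $\phi(O(x_i))\subseteq O(\phi(x_i))$ explicit, and you dispose of the infinite case up front), so there is nothing to add.
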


The following statement is key to the equivariant category theory. 

\begin{lemma}\label{conservation}\rm{(cf. \cite[Lemma 3.14, p. 2305]{colman2013equivariant})
Let $X$ be a $G$-space, and let $x,y\in X$ such that $H:=G_x$. There exists a continuous path $\alpha:[0,1]\to X^{H}$ such that $\alpha(0)=x$ and $\alpha(1)\in Gy$ if and only if there exists a $G$-homotopy $F:Gx\times [0,1]\to X$ such that $F_0=incl_{Gx}$ and $F_1(Gx)\subset Gy$.}
\end{lemma}
\begin{proof}
To show the converse, we take $\alpha(t):=F(x,t)$. Note that since $x\in X^H$ and $F$ is a $G$-map, it follows that $\alpha(t)\in X^H$, for any $t\in [0,1]$.    

To show the other implication, we take $F(gx,t):=g\alpha(t)$. Note that since $\alpha(t)\in X^H$, for any $t\in [0,1]$, we have $H\subset G_{\alpha(t)}$, for any $t\in [0,1]$, and hence, $F$ is well-defined. Additionally, $F$ satisfies the desired properties. 
\end{proof}

From \cite[Definition 3.13, p. 2305]{colman2013equivariant}, a $G$-space $X$ is said to be \textit{$G$-connected} if the $H$-fixed point set $X^H$ is path-connected, for every closed subgroup $H$ of $G$. 

\medskip Note that if $X$ is $G$-connected, and if $x,y\in X$ are such that $H:=G_x\subset G_y$ (equivalently, $y\in X^H$), then there exists a continuous path $\alpha:[0,1]\to X^{H}$ such that $\alpha(0)=x$ and $\alpha(1)=y\in Gy$. Motivated by Lemma~\ref{conservation}, we present the following definition.

\begin{definition}[$y$-connected]\label{defn:y-connected}
Let $X$ be a $G$-space, and let $y\in X$. We say that $X$ is \textit{$y$-connected} if for each point $x\in X$, there is a continuous path $\alpha:[0,1]\to X^H$, where $H=G_x$, such that $\alpha(0)=x$ and $\alpha(1)\in Gy$.    
\end{definition}

Note that if $X$ is $y$-connected, then  $X$ is also $y'$-connected for any $y'\in Gy$. Moreover, if $X$ is $G$-connected, then $X$ is $x_0$-connected for any $x_0\in X^G$. Furthermore, if $X$ is $x_0$-connected for some point $x_0\in X^G$, then $X$ is $G$-connected.

\medskip It is important to note that there exist $G$-contractible spaces which are not $G$-connected (see Example~\ref{exam:1-2}(1)). However, we have the following remark. 

\begin{remark}\label{rem:g-contractil-connected}
    By Lemma~\ref{lem:g-contractible-connectivity}(1), if $X$ is $G$-contractible, then $X$ is $y$-connected for some $y\in X$. Moreover, if $X$ is $G$-contractible to a fixed point $x_0\in X^G$, then $X$ is $G$-connected.
\end{remark}

By Remark~\ref{rem:g-contractil-connected}, we can conclude that if a $G$-space $X$ is not $y$-connected for any $y\in X$, then $\text{cat}_G(X)\geq 2$. 

\medskip We have the following example.  

\begin{example}[Standard Hamiltonian action]\label{acao-hamil}\rm{
Let $G=S^1$ act on $X=S^2$ by rotation about the $z$-axis. This is known as the \textit{standard Hamiltonian action} of $S^1$ on $S^2$ \cite[Example 8.39, p. 248]{cornea2003lusternik}. The fixed point set of $X$ is $X^G=\{p_N,p_S\}\neq\emptyset$, where $p_N$ is the north pole and $p_S$ is the south pole. Hence, $X$ is not $G$-connected. Furthermore, $X$ is not $y$-connected for any $y\in X$ (and, of course, it is not $G$-contractible). Hence, we conclude that  $\text{cat}_{S^1}(S^2)=2$. Its equivariant categorical covering is given by $U_1=S^2-\{p_N\}$ and $U_2=S^2-\{p_S\}$.}
\end{example}

Let $X$ be a $G$-space and $A$ be a $G$-invariant set of $X$. From \cite[Definition 2.2, p. 103]{lubawski2015}, the \textit{$A$-Lusternik-Schnirelmann $G$-category} of $X$, denoted by $_A\text{cat}_G(X)$, is the least integer $k$ such that $X$ can be covered by open $G$-invariant sets $U_1,\ldots,U_k$ of $X$ such that each inclusion map $\text{incl}_{U_i}:U_i\hookrightarrow X$ is $G$-homotopic to a $G$-map $c_i:U_i\to X$ with $c_i(U_i)\subset A$. If no such integer $k$ exists, we set $_A\text{cat}_G(X)=\infty$. Note that $_{O(x)}\text{cat}_G(X)\geq \text{cat}_G(X)$, for any $x\in X$. 

\medskip Further discussion of $_A\text{cat}_G(X)$ can be found in  \cite{lubawski2015} and \cite{bayeh2019}. 

\medskip We have the following statement, which shows that under $y$-connectivity, the inclusion map of any $G$-categorical set is $G$-homotopic to a $G$-map with values in the orbit of $y$. As a direct consequence, the $A$-Lusternik-Schnirelmann $G$-category satisfies the following property: there exists a subset $A\subseteq X$
such that $_Acat_G(X)=cat_G(X)$, whenever $X$ is $y$-connected. 

\begin{proposition}\label{o-cat-leq-equicat}\rm{
Let $X$ be a $G$-space such that $X$ is $y$-connected. 
\begin{enumerate}
    \item[(1)] If $U\subset X$ is $G$-categorical, then the inclusion map $\text{incl}_{U}:U\hookrightarrow X$ is $G$-homotopic to a $G$-map $s:U\to X$ such that $s(U)\subset Gy$.
    \item[(2)] The following equality holds: \[_{O(y)}\text{cat}_G(X)=\text{cat}_G(X).\] 
\end{enumerate}}
\end{proposition}
\begin{proof}
    \noindent\begin{enumerate}
        \item[(1)] Denote by $H:U\times [0,1]\to X$ the $G$-homotopy between the inclusion $\text{incl}_U:U\hookrightarrow X$ and a $G$-map $c:U\to X$ such that $c(U)\subset Gx$, for some $x\in X$. Since $X$ is $y$-connected, by Lemma \ref{conservation}, we have a $G$-homotopy $F:Gx\times [0,1]\to X$ such that $F_0=\text{incl}_{Gx}$ and $F_1(Gx)\subset Gy$.

Define a homotopy $T:U\times [0,1]\to X$ given by:\[T(x,t):=\left\{
  \begin{array}{ll}
    H(x,2t), & \hbox{ if $0\leq t\leq 1/2$;} \\
   F(c(x),2t-1)), & \hbox{ if $1/2\leq t\leq 1$.}
  \end{array}
\right.\] We have that $T$ is well-defined, it is $G$-equivariant, and it is a homotopy of the inclusion $\text{incl}_{U}:U\hookrightarrow X$ into a map $s:=T_1:U\to X$ such that $s(U)\subset Gy$.
\item[(2)] By Item (1), we obtain $_{O(x)}\text{cat}_G(X)\leq \text{cat}_G(X)$. In addition, recall that the other inequality always holds.
    \end{enumerate}
\end{proof}

Note that Proposition~\ref{o-cat-leq-equicat} generalizes \cite[Remark 2.3, p. 103]{lubawski2015}, where the authors considered more restrictive hypotheses. 

\medskip Additionally, if $X$ is $x_0$-connected for some point $x_0\in X^G$ (equivalently, if $X$ is $G$-connected and $X^G\neq\varnothing$), then $_{\{x_0\}}\text{cat}_G(X)\geq \text{cat}(X)$. Hence, Proposition~\ref{o-cat-leq-equicat} implies that the nonequivariant category provides a lower bound for the equivariant category, i.e., $\text{cat}(X)\leq \text{cat}_G(X)$ for any $G$-connected space $X$ with $X^G\neq\varnothing$.

\medskip The following definition is the equivariant version of cofibration and retract in the category of $G$-spaces.

\begin{definition}[$G$-cofibration]\label{defn:g-cofibration}
Let $X$ be a $G$-space and $A\subset X$ be an invariant subset. 
\begin{enumerate}
    \item[(1)] The inclusion map $j:A\hookrightarrow X$ is a \textit{$G$-cofibration} if for every $G$-space $Y$, every $G$-map $f:X\to Y$, and every $G$-homotopy $H:A\times [0,1]\to Y$ satisfying $H(a,0)=f\circ j(a)$, for all $a\in A$, there exists a $G$-homotopy $\widehat{H}:X\times [0,1]\to Y$ such that $\widehat{H}(j(a),t)=H(a,t)$, for all $a\in A$ and $t\in [0,1]$, and $\widehat{H}(x,0)=f(x)$, for all $x\in X$. We recall that $G$ acts trivially on $I=[0,1]$ and diagonally on $X\times [0,1]$.
\begin{eqnarray*}
\xymatrix{ A \ar@{^{(}->}[rr]^{\,\,j} \ar[dd]_{\iota_0}  &   &  X\ar[dd]^{f}\ar[dl]_{\iota_0}\\
& X\times [0,1] \ar@{-->}[dr]^{\,\,\widehat{H}}  &  \\
 A\times [0,1]\ar[ur]^{\,\,j\times 1_I}\ar[rr]_{H} &    & Y}
\end{eqnarray*} where $\iota_0(x)=(x,0)$ for all $x$.
\item[(2)]  $A$ is a \textit{$G$-retract} of $X$ if there exists a $G$-map $r:X\to A$ such that $r(a)=a$, for all $a\in A$. Such a map $r$ is called a \textit{$G$-retraction}. 
\end{enumerate} 
\end{definition}

We have a characterization of $G$-cofibrations in terms of $G$-retracts, which leads to a key property of $G$-cofibrations. The proof of this claim follows analogously to the nonequivariant case, as presented in \cite[Theorem 4.1.16, Theorem 4.1.7]{aguilar2002algebraic}.

\begin{proposition}\rm{
Let $X$ be a $G$-space and let $A\subset X$ be a closed invariant subset. Then: 
\begin{enumerate}
    \item[(1)]\label{caracterizacao-Gcofibration} The inclusion map $j:A\hookrightarrow X$ is a $G$-cofibration if and only if  $X\times \{0\}\cup A\times I$ is a $G$-retract of $X\times I$. 
$$
\begin{tikzpicture}
\filldraw[fill=black!20!white, draw=black] (-1,0) rectangle (1,1);
\draw(-3,0)--(3,0); 
\draw[dashed](-3,0)--(-3,1); \draw[dashed](-3,1)--(-1,1); \draw[dashed](1,1)--(3,1); \draw[dashed](3,1)--(3,0);
\node [below] at (3,0) {\tiny$X$}; \node [below] at (0,0) {\tiny$A$};
\node at (0,0.5) {\tiny$A\times [0,1]$};
\end{tikzpicture}
$$
\item[(2)]\label{G-cofibration-implica-homotopia} If the inclusion map $j:A\hookrightarrow X$ is a $G$-cofibration, then there exists a  $G$-homotopy $D:X\times [0,1]\to X$ and a $G$-map  $\varphi:X\to [0,1]$ such that $A\subset \varphi^{-1}(0)$ and: \begin{itemize}
\item $D(x,0)=x$, for all $x\in X$.
\item $D(a,t)=a$, for all $a\in A$ and for all $t\in [0,1]$.
\item $D(x,t)\in A$, for all $x\in X$ and for all $t>\varphi(x)$. 
\end{itemize}
\end{enumerate}}
\end{proposition}
\begin{proof}
\noindent\begin{enumerate}
    \item[(1)] If $j:A\hookrightarrow X$ is a $G$-cofibration, then the $G$-map $f:X\to X\times \{0\}\cup A\times I$ given by $f(x)=(x,0)$, for all $x\in X$, and the $G$-homotopy $H:A\times I\to X\times \{0\}\cup A\times I$ given by $H(a,t)=(a,t)$, for all $(a,t)\in A\times I$, together define a $G$-map $\widehat{H}:X\times I\to X\times \{0\}\cup A\times I$, which serves as a $G$-retraction. This means we can take $r=\widehat{H}$.

Conversely, if we have a $G$-retraction $r:X\times I\to X\times \{0\}\cup A\times I$, then for any $G$-space $Y$, any $G$-map $f:X\to Y$, and any $G$-homotopy $H:A\times I\to Y$ satisfying $H(a,0)=f\circ j(a)$, for all $a\in A$, we can define a $G$-homotopy $\widehat{H}:X\times I\to Y$ by:
\[\widehat{H}(x,t):=\left\{
  \begin{array}{ll}
    f\circ proj_X\circ r(x,t), & \hbox{if $(x,t)\in r^{-1}(X\times\{0\})$;} \\
   H\circ r(x,t), & \hbox{if $(x,t)\in r^{-1}(A\times I)$.}
  \end{array}
\right.\] We note that $\widehat{H}$ is continuous, since $X\times\{0\}$ and $A\times I$ are closed in $X\times I$.
\item[(2)] We shall use the characterization given in Item (1), namely, that there exists a $G$-retraction $r:X\times I\to X\times \{0\}\cup A\times I$. We define $\varphi$ and $D$ as follows:
\begin{eqnarray*}
\varphi(x) &:=& \sup_{t\in I}\mid t-proj_I r(x,t)\mid,~ \text{ for } x\in X;\\
D(x,t) &:=& proj_X r(x,t), ~\text{ for } x\in X, ~t\in I.
\end{eqnarray*} We have that $\varphi$ is well-defined and is a $G$-map, since for any $g\in G$ and any $x\in X$, we have:
\begin{eqnarray*}
\varphi(gx) &=&  \sup_{t\in I}\mid t-proj_I r(gx,t)\mid\\
&=& \sup_{t\in I}\mid t-proj_I (gr(x,t))\mid\\
&=& \sup_{t\in I}\mid t-proj_I r(x,t)\mid\\
&=& \varphi(x)\\
&=& g\varphi(x).
\end{eqnarray*} Here, we recall that $G$ acts trivially on $I$ and diagonally on $X\times I$. Furthermore, $D$ is well-defined, equivariant, and satisfies the desired properties. Note that for $t>\varphi(x)$, we have $r(x,t)\in A\times I$, since $t>0$. Thus, $D(x,t)\in A$. 
\end{enumerate} 
\end{proof}

Now, we introduce the notion of a $G$-well-pointed space.  

\begin{definition}\label{defn:g-well-pointed}
A $G$-space $X$ has a \textit{$G$-non-degenerate basepoint} $x_0$ if $x_0\in X^G$ and the inclusion $\{x_0\}\hookrightarrow X$ is a $G$-cofibration. A $G$-space is \textit{$G$-well-pointed} if it has a $G$-non-degenerate basepoint.
\end{definition}

Examples of equivariant cofibrations are usually a CW pair $(X,A)$, where some group $G$ acts on $X$ (cellularly) \cite[Theorem 3.1]{may1996}. Thus, every $G$-CW-complex with a $G$-fixed basepoint is $G$-wellpointed, since the inclusion of the basepoint is a $G$-cofibration.

\medskip Proposition~\ref{G-cofibration-implica-homotopia} implies that any $G$-non-degenerate basepoint admits a \aspas{good} open $G$-categorical neighborhood.   

\begin{proposition}\label{G-ponto-base-nao-degenerado}\rm{
If $X$ is a $G$-space with a $G$-non-degenerate basepoint $x_0$, then there exists an open invariant neighborhood $N$ of $x_0$ that \textit{$G$-contracts} to $x_0$ in $X$ \textit{relative} to $x_0$, that is, there exists a $G$-homotopy $H:N\times [0,1]\to X$ such that $H_0=\text{incl}_N$, $H_1(N)\subset O(x_0)=\{x_0\}$, and $H(x_0,t)=x_0$, for all $t\in [0,1]$.}
\end{proposition}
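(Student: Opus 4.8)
The plan is to feed the $G$-cofibration hypothesis into Proposition \ref{G-cofibration-implica-homotopia} with $A=\{x_0\}$ and then read off the desired neighborhood from the function $\varphi$ that proposition produces. First I would check that $\{x_0\}$ is a closed invariant subset of $X$: it is closed because $X$ is Hausdorff, and invariant because $x_0\in X^G$ forces $O(x_0)=\{x_0\}$. Hence the inclusion $\{x_0\}\longrightarrow X$ is a $G$-cofibration by hypothesis, and Proposition \ref{G-cofibration-implica-homotopia} applies. It yields a $G$-homotopy $D:X\times [0,1]\longrightarrow X$ and a $G$-map $\varphi:X\longrightarrow [0,1]$ with $x_0\in\varphi^{-1}(0)$ satisfying $D(x,0)=x$ for all $x$, $D(x_0,t)=x_0$ for all $t$, and $D(x,t)=x_0$ whenever $t>\varphi(x)$.

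Next I would set $N:=\varphi^{-1}\big([0,1)\big)=\{x\in X:\varphi(x)<1\}$. This set is open because $\varphi$ is continuous and $[0,1)$ is open in $[0,1]$; it is invariant because $\varphi$ is a $G$-map and $G$ acts trivially on $[0,1]$, so the preimage under $\varphi$ of any subset of $[0,1]$ is invariant; and it contains $x_0$ since $\varphi(x_0)=0<1$. Thus $N$ is an open invariant neighborhood of $x_0$. I would then define $H$ simply by restriction, $H:=D|_{N\times [0,1]}$, which is again a $G$-homotopy since $N\times [0,1]$ is invariant under the diagonal action and $D$ is equivariant.

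It remains to verify the three required properties of $H$, and this is where the choice of $N$ does its work. Property $D(x,0)=x$ gives $H_0=i_N$, and $D(x_0,t)=x_0$ gives $H(x_0,t)=x_0$ for all $t\in [0,1]$. For the contraction I would invoke the last property of $D$: every $x\in N$ satisfies $\varphi(x)<1$, so taking the endpoint $t=1>\varphi(x)$ yields $D(x,1)=x_0$, that is $H_1(N)\subseteq\{x_0\}=O(x_0)$. I expect this endpoint step to be the only genuine point of the argument—the strict inequality $\varphi(x)<1$ built into the definition of $N$ is precisely what lets me apply the condition at $t=1$. Everything else reduces to the routine fact that restricting an equivariant map to an invariant subspace preserves equivariance, so I would not belabor it.
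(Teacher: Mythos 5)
Your proposal is correct and follows exactly the paper's own argument: apply Proposition \ref{G-cofibration-implica-homotopia} to $A=\{x_0\}$, set $N:=\varphi^{-1}([0,1))$, and take $H:=D\mid_{N\times[0,1]}$, with the endpoint evaluation $t=1>\varphi(x)$ giving the contraction. You simply spell out the verifications the paper leaves implicit.
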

\begin{proof}
We can use Proposition \ref{G-cofibration-implica-homotopia} for the closed invariant $A=\{x_0\}$. Then, we define $N:=\varphi^{-1}([0,1))\subset X$ and let \[H:=D\mid_{N\times [0,1]}:N\times [0,1]\to X\] be the restriction of $D$. Thus, $N$ is an open invariant neighborhood $N$ of $x_0$ that $G$-contracts to $x_0$ in $X$ relative to $x_0$.
\end{proof}

Now, just as in the nonequivariant case, we also need some separation conditions. Recall that a Hausdorff space $X$ is called \textit{normal} if whenever $A,B\subset X$ are closed sets such that $A\cap B=\emptyset$, there exist disjoint open sets $U,V\subset X$ such that $A\subset U$ and $B\subset V$.

\medskip The following definition is the equivariant version of normal spaces.

\begin{definition}[$G$-normal space]\label{defn:g-normal}
A $G$-space $X$ is called \textit{$G$-normal}  if whenever $A,B\subset X$ are closed invariant sets such that $A\cap B=\emptyset$, there exist  disjoint open invariant sets $U,V\subset X$ such that $A\subset U$ and $B\subset V$.
\end{definition}

The following statement is the normal version of \cite[Lemma 3.12, p. 2305]{colman2013equivariant}.

\begin{proposition}\label{prop:normal-g-g-normal} \rm{
If $X$ is a normal $G$-space, then $X$ is $G$-normal.}
\end{proposition}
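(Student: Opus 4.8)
The plan is to reduce the problem to the orbit space $X/G$, exploiting the facts recalled above that the orbit map $p\colon X\to X/G$ is continuous, open, closed and surjective, and that $X/G$ is Hausdorff because $G$ is compact and $X$ is Hausdorff.

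First I would record an elementary observation about invariant sets: if $A\subseteq X$ is invariant then $p^{-1}(p(A))=A$, and consequently if $A,B$ are invariant with $A\cap B=\emptyset$ then $p(A)\cap p(B)=\emptyset$ (a common value $p(a)=p(b)$ would force $b\in O(a)\subseteq A$, contradicting $A\cap B=\emptyset$). Moreover, since $A$ and $B$ are closed and $p$ is a closed map, the images $p(A)$ and $p(B)$ are closed in $X/G$.

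The heart of the argument is to prove that $X/G$ is normal. Since it is already Hausdorff, I only need the separation property for disjoint closed sets, and here I would invoke (and include a short proof of) the standard fact that a continuous closed surjection carries a normal space onto a normal space. Concretely, given disjoint closed $C,D\subseteq X/G$, their preimages $p^{-1}(C),p^{-1}(D)$ are disjoint closed subsets of the normal space $X$, hence lie in disjoint open sets $U,V$. The key move is to pass to the \emph{co-images} $C^{*}:=(X/G)\setminus p(X\setminus U)$ and $D^{*}:=(X/G)\setminus p(X\setminus V)$: these are open because $p$ is closed, they contain $C$ and $D$ respectively (for $c\in C$ the whole fibre satisfies $p^{-1}(c)\subseteq p^{-1}(C)\subseteq U$, so $c\notin p(X\setminus U)$), and they are disjoint because a common point $y\in C^{*}\cap D^{*}$ would have nonempty fibre $p^{-1}(y)\subseteq U\cap V=\emptyset$, a contradiction.

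With $X/G$ normal, I would finish by separating the disjoint closed sets $p(A),p(B)$ by disjoint open sets $\widetilde U,\widetilde V\subseteq X/G$ and pulling them back. The sets $p^{-1}(\widetilde U)$ and $p^{-1}(\widetilde V)$ are open by continuity of $p$, invariant since any $p$-preimage is a union of orbits, disjoint because $\widetilde U\cap\widetilde V=\emptyset$, and contain $A$ and $B$ respectively since $A\subseteq p^{-1}(p(A))\subseteq p^{-1}(\widetilde U)$ and likewise for $B$. This produces the required disjoint open invariant separation, proving $G$-normality. I expect the only genuine obstacle to be this middle step—normality of $X/G$—and within it the construction of the co-images $C^{*},D^{*}$; everything else is formal. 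As an alternative that avoids naming the quotient normality explicitly, one can stay inside $X$: from a nonequivariant separation $A\subseteq U_0$, $B\subseteq V_0$ by disjoint opens, replace $U_0,V_0$ by their saturated interiors $U:=\{x\in X: O(x)\subseteq U_0\}=X\setminus p^{-1}(p(X\setminus U_0))$ and similarly $V$, which are open and invariant by closedness of $p$, still contain $A,B$ by invariance, and remain disjoint; this is precisely the same computation transported back to $X$.
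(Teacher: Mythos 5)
Your proposal is correct and follows essentially the same route as the paper: pass to the orbit space, use that the closed surjection $p\colon X\to X/G$ carries normality of $X$ to normality of $X/G$, separate the disjoint closed images $p(A),p(B)$ there, and pull the separating open sets back to disjoint open invariant neighborhoods. The only difference is that you supply the co-image argument for the quotient's normality in full (and sketch an equivalent saturated-interior variant), whereas the paper simply cites this fact from Dugundji.
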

\begin{proof}
Recall that the quotient map $p:X\to X/G$ is closed. If $X$ is normal, then $X/G$ is normal (see \cite[Theorem 3.3, p. 145]{dugundji1966general}). Thus, it suffices to show that the normality of $X/G$ implies the $G$-normality of $X$. Let $A,B\subset X$ be disjoint closed invariant sets. Using the quotient map $p:X\to X/G$, we see that $p(A), p(B)\subset X/G$ are disjoint closed sets because $A$ and $B$ are disjoint invariant sets. Since $X/G$ is normal, there exist disjoint open sets $U,V\subset X/G$ such that $p(A)\subset U$ and $p(B)\subset V$. 

Next, we observe that: \[A\subset (p^{-1}\circ p)(A)\subset p^{-1}(U), \quad\quad B\subset (p^{-1}\circ p)(B)\subset p^{-1}(V).\] Since $p^{-1}(U)$ and $p^{-1}(V)$ are invariant, they are open invariant sets in $X$. Therefore, there exist disjoint open invariant sets $p^{-1}(U)$ and $p^{-1}(V)\subset X$ such that $A\subset p^{-1}(U)$ and $B\subset p^{-1}(V)$. This proves that $X$ is $G$-normal.
\end{proof}

Note that any $G$-normal space satisfies the following properties, which are an extension of the nonequivariant case (cf. \cite[Theorem 6.1, pg. 152]{dugundji1966general}). 

\begin{proposition}\rm{
Let $X$ be a $G$-space.
\begin{enumerate}
    \item[(1)]\label{primera-prop-G-normal} If $X$ is $G$-normal, then for each invariant closed subset $A\subset X$ and nonempty invariant open set $U\supset A$, there exists an invariant open set $V$ such that $A\subset V\subset \overline{V}\subset U$.
    \item[(2)]\label{covering-characterization-G-normality} $X$ is $G$-normal if and only if for any covering $\{U_i\}_{i=1}^n$ of $X$ by open invariant sets, there exists a covering $\{V_i\}_{i=1}^n$ of $X$ by open invariant sets such that $\overline{V_i}\subset U_i$, for all $i$.
\end{enumerate} }
\end{proposition}
\begin{proof}
\noindent\begin{enumerate}
    \item[(1)] Recall that $X-U$ is invariant because $U$ is invariant. Since $X$ is $G$-normal, and $A$ and $B:=X-U$ are disjoint closed invariant subsets of $X$ (with $U$ being nonempty), there exist disjoint open invariant sets $F$ and $E$ such that $A\subset F$ and $B\subset E$. Define $V:=F$. Then, we have $\overline{V}\subset U$, as required. 
    \item[(2)] $(\Longrightarrow)$: For each $x\in X$, define $h(x):=\max\{i:~x\in U_i\}$. We will define the sets $\{V_i\}_{i=1}^n$ by induction on $n$.

Define $F_1:=X-(U_2\cup\cdots\cup U_n)\subset U_1$. First, note that $F_1$ is closed and invariant. By item (1), there exists an open invariant set $V_1$ such that $F_1\subset V_1\subset\overline{V_1}\subset U_1$. Note that $V_1\cup U_2\cup\cdots\cup U_n=X$. Indeed, given $x\in X$, if $h(x)>1$, then $x\in U_2\cup\cdots\cup U_n$, and if $h(x)\leq 1$, then $x\notin U_2\cup\cdots\cup U_n$, i.e., $x\in F_1\subset V_1$. Therefore, $V_1\cup U_2\cup\cdots\cup U_n=X$.

Now, define $F_2:=X-(V_1\cup U_3\cup\cdots\cup U_n)\subset U_2$. Note that $F_2$ is closed and invariant. By item (1), there is an open invariant $V_2$ with $F_2\subset V_2\subset\overline{V_2}\subset U_2$. Note that $V_1\cup V_2\cup U_3\cup\cdots\cup U_n=X$. Indeed, given $x\in X$, if $h(x)>2$, then $x\in U_3\cup\cdots\cup U_n$, and if $h(x)\leq 2$, then $x\notin U_3\cup\cdots\cup U_n$. Therefore, $V_1\cup V_2\cup U_3\cup\cdots\cup U_n=X$.

Assume that $V_1, V_2,\ldots, V_k$ are open invariant sets such that \[V_1\cup\cdots\cup V_k\cup U_{k+1}\cup\cdots\cup U_n=X.\] Define $F_{k+1}:=X-(V_1\cup\cdots\cup V_k\cup U_{k+2}\cup\cdots\cup U_n)\subset U_{k+1}$. Note that $F_{k+1}$ is closed and invariant. By item (1), there exists an open invariant set $V_{k+1}$ such that $F_{k+1}\subset V_{k+1}\subset\overline{V_{k+1}}\subset U_{k+1}$. Note that $V_1\cup\cdots \cup V_k\cup V_{k+1}\cup U_{k+2}\cup\cdots\cup U_n=X$. Indeed, given $x\in X$, if $h(x)>k+1$, then $x\in U_{k+2}\cup\cdots\cup U_n$, and if $h(x)\leq k+1$, then $x\notin U_{k+2}\cup\cdots\cup U_n$. Therefore, \[V_1\cup\cdots \cup V_k\cup V_{k+1}\cup U_{k+2}\cup\cdots\cup U_n=X.\]

Finally, assume that $V_1, V_2,\ldots, V_{n-1}$ are open invariant sets such that \[V_1\cup\cdots\cup V_{n-1}\cup U_n=X.\] Define $F_{n}:=X-(V_1\cup\cdots\cup V_{n-1})\subset U_{n}$. Note that $F_{n}$ is closed and invariant. By item (1), there exists an open invariant $V_{n}$ such that $F_{n}\subset V_{n}\subset\overline{V_{n}}\subset U_{n}$. Note that $V_1\cup\cdots \cup V_n=X$. Indeed, given $x\in X$, if $x\in V_1\cup\cdots\cup V_{n-1}$ the equality holds, and if $x\notin V_1\cup\cdots\cup V_{n-1}$, then $x\in F_{n}\subset V_{n}$. Therefore, $V_1\cup\cdots \cup V_n=X$.

Thus, there exists a covering $\{V_i\}_{i=1}^n$ of $X$ by open invariant sets such that $\overline{V_i}\subset U_i$, for all $i$.

$(\Longleftarrow)$: Let $A,B\subset X$ be disjoint closed invariant sets in $X$. Then \[\{X-A, X-B\}\] is a covering of $X$ by open invariant sets. Hence, there exist open invariant sets $V_1$ and $V_2$ in $X$ such that $\overline{V_1}\subset X-A$, $\overline{V_2}\subset X-B$, and $V_1\cup V_2=X$. Note that $U:=X-\overline{V_1}$ and $V:=X-\overline{V_2}$ are open invariant neighborhoods of $A$ and $B$, respectively, and $U\cap V=X-(\overline{V_1}\cup \overline{V_2})=\emptyset$. Thus, $X$ is $G$-normal. 
    \end{enumerate}
\end{proof}


\section{Equivariant Category of Wedges}\label{sec:equiv-wedges}

In this section, we use the previous results to derive important claims that will imply our first main theorem (Theorem \ref{principal}).

\medskip First, we show that for any $G$-connected, $G$-normal space with a $G$-non-degenerate basepoint, there exists a \aspas{good} open $G$-categorical cover. This claim is the equivariant version of \cite[Lemma 1.25, pg. 13]{cornea2003lusternik}.

\begin{proposition}\label{G-categorico-baseado}\rm{
Suppose $X$ is a $G$-connected, $G$-normal space with a $G$-non-degenerate basepoint $x_0$. If $\text{cat}_G(X)\leq n$, then there is an open $G$-categorical cover $V_1,\ldots,V_n$ such that $x_0\in V_j$, for all $j=1,\ldots,n$, and each $V_j$ is $G$-contractible to $O(x_0)=\{x_0\}$ relative to $x_0$, that is, there is a $G$-homotopy $T_j:V_j\times [0,1]\to X$ such that $T_j(x,0)=x$, for all $x\in V_j$, $T_j(x,1)=x_0$, for all $x\in V_j$, and $T_j(x_0,t)=x_0$, for all $t\in [0,1]$.}
\end{proposition}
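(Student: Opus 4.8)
The plan is to begin with an arbitrary $G$-categorical cover and enlarge each of its members so that it contains $x_0$, and then to replace the free contraction coming from Lemma~\ref{categorico-homo-constante} by one that keeps $x_0$ fixed. Choose open invariant $G$-categorical sets $U_1,\dots,U_n$ covering $X$ (repeating members if $cat_G(X)<n$). By Proposition~\ref{G-ponto-base-nao-degenerado} the non-degenerate basepoint provides an open invariant neighbourhood $N$ of $x_0$ and a $G$-homotopy $\rho\colon N\times I\to X$ contracting $N$ to $x_0$ relative to $x_0$. Using $G$-normality and the shrinking lemma (Proposition~\ref{covering-characterization-G-normality}) I would pass to an open invariant cover $\{W_i\}$ with $\overline{W_i}\subseteq U_i$, and by Proposition~\ref{primera-prop-G-normal} fix a smaller invariant neighbourhood with $x_0\in N_0\subseteq\overline{N_0}\subseteq N$. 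Setting $V_i:=W_i\cup N$ produces open invariant sets, each containing $x_0$, whose union is still $X$. Everything then reduces to building, for each $i$, a $G$-homotopy $H\colon V_i\times I\to X$ with $H_0=i_{V_i}$, $H_1=c_{x_0}$ and $H(x_0,t)=x_0$ for all $t$; such an $H$ simultaneously certifies that $V_i$ is $G$-categorical and that it contracts to $\{x_0\}$ relative to $x_0$.

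To assemble $H$ I would work with the cofibration data. Applying Proposition~\ref{G-cofibration-implica-homotopia} to the closed invariant set $\{x_0\}$ gives a $G$-homotopy $D\colon X\times I\to X$ and a $G$-map $\varphi\colon X\to I$ with $D_0=\mathrm{id}$, $D(x_0,t)=x_0$ and $D(x,t)=x_0$ for $t>\varphi(x)$, where $N=\varphi^{-1}([0,1))$; thus $D$ already contracts $N$ to $x_0$ relative to $x_0$ and fixes $x_0$ on all of $X$. On the other part $W_i$ the inclusion is freely $G$-homotopic to $c_{x_0}$ by Lemma~\ref{categorico-homo-constante}, say through $K^i\colon W_i\times I\to X$. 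The homotopy $H$ should agree with $D$ (equivalently $\rho$) on a neighbourhood of $x_0$, so that it is stationary at $x_0$, and agree with $K^i$ on $W_i\setminus N$; to interpolate on the overlap $W_i\cap N$ I would extract from $G$-normality a $G$-invariant Urysohn function $\mu\colon X\to I$ with $\mu\equiv 0$ on $\overline{N_0}$ and $\mu\equiv 1$ off $N$ (pulled back from a Urysohn function on the orbit space $X/G$, which is normal because $X$ is $G$-normal), and let $\mu$ govern a reparametrised concatenation of the two contractions.

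The main obstacle is exactly this gluing, and it is why the non-degenerate basepoint is needed. A free contraction of $V_i$ traces a loop $t\mapsto H(x_0,t)$ at $x_0$ that in general cannot be contracted in $X$, so one cannot first contract freely and then repair the basepoint; instead the homotopy must be built to fix $x_0$ from the outset. Making the $\mu$-blend of $\rho$ and $K^i$ continuous across the interface $\partial N$ while keeping it equal to $x_0$ on a neighbourhood of $x_0$ is the delicate step, and this is where the homotopy extension property supplied by the $G$-cofibration $\{x_0\}\hookrightarrow V_i$ (Proposition~\ref{caracterizacao-Gcofibration} and Proposition~\ref{G-cofibration-implica-homotopia}) is genuinely used, together with $G$-orbit connectedness through Lemma~\ref{categorico-homo-constante}. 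Once $H$ is shown to be continuous, $G$-equivariant (which is automatic since $N$, $N_0$, $\mu$, $D$, $\rho$ and $K^i$ are all equivariant and $x_0$ is $G$-fixed), and stationary at $x_0$, the sets $V_1,\dots,V_n$ form the required $G$-categorical cover with the claimed relative contractions.
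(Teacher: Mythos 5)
Your setup (shrink the cover with Proposition~\ref{covering-characterization-G-normality}, use Proposition~\ref{G-ponto-base-nao-degenerado} for the basepoint neighbourhood $N$, use Lemma~\ref{categorico-homo-constante} for the free contractions) matches the paper, but the construction of the relative contraction has a genuine gap at exactly the place you flag as ``the delicate step.'' Setting $V_i:=W_i\cup N$ forces you to reconcile two different homotopies on the overlap $W_i\cap N$, and the proposed $\mu$-governed ``blend'' or ``reparametrised concatenation'' of $\rho$ and $K^i$ is not actually a construction: in a general $G$-space there is no way to interpolate between two maps into $X$ using a Urysohn parameter (no convex structure), and a concatenation fails because the time-$s$ endpoint of $\rho$ need not lie in $W_i$, where $K^i$ is defined, nor does $K^i(\cdot,0)$ match $\rho(\cdot,s)$. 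Invoking the homotopy extension property does not rescue this either: first, the hypothesis gives that $\{x_0\}\hookrightarrow X$ is a $G$-cofibration, not $\{x_0\}\hookrightarrow V_i$ (the retraction of $X\times I$ onto $X\times\{0\}\cup\{x_0\}\times I$ need not restrict to $V_i\times I$); second, as you yourself observe, a free contraction traces a possibly essential loop at $x_0$, so no post hoc repair of $K^i$ can work. You have correctly identified the obstruction but not removed it.

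The paper's solution is to make the overlap empty, so that no interpolation is ever needed. Having fixed $\{W_i\}$ with $\overline{W_i}\subseteq U_i$ and having sorted the $U_i$ so that $x_0\in U_i$ exactly for $i\leq k$, it sets $\mathcal{N}=N\cap U_1\cap\cdots\cap U_k\cap(X-\overline{W}_{k+1})\cap\cdots\cap(X-\overline{W}_{n})$ and chooses, by Proposition~\ref{primera-prop-G-normal}, an open invariant $M$ with $x_0\in M\subseteq\overline{M}\subseteq\mathcal{N}$. Then $V_i:=(U_i\cap(X-\overline{M}))\cup M$ for $i\leq k$ and $V_i:=W_i\cup\mathcal{N}$ for $i>k$ are each a union of two \emph{disjoint} open invariant sets: one piece avoids the basepoint and is contracted by the free homotopy $H_i$ (which ends at $x_0$ by Lemma~\ref{categorico-homo-constante}), the other lies in $N$ and is contracted by the basepoint homotopy rel $x_0$. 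The piecewise homotopy $T_i$ is then automatically continuous, stationary at $x_0$, and ends at the constant $c_{x_0}$, and the union of the $V_i$ still covers $X$ because $(U_i\cap(X-\overline{M}))\cup M\cup\mathcal{N}\supseteq U_i$ and $W_i\cup\mathcal{N}\supseteq W_i$. To repair your argument you should replace $V_i=W_i\cup N$ by this disjointified version rather than attempt any gluing on an overlap.
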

\begin{proof}
Let $\{U_1,\ldots,U_n\}$ be an open $G$-categorical cover of $X$. By Proposition~\ref{o-cat-leq-equicat}(1), we may assume that the inclusions $\text{incl}_{U_j}:U_j\hookrightarrow X$ are all $G$-homotopic into $O(x_0)=\{x_0\}$, with respective $G$-contracting $H_j:U_j\times [0,1]\to X$. Note that by the $G$-normality of $X$, there exists a refined cover $\{W_j\}_{j=1}^{n}$ of $\{U_j\}_{j=1}^{n}$ by open invariant sets with $W_j\subset \overline{W_j}\subset U_j$,  for each $j=1,\ldots,n$ (see Proposition~\ref{covering-characterization-G-normality}(2)).

By the $G$-non-degenerate basepoint hypothesis (see Proposition \ref{G-ponto-base-nao-degenerado}), there exists an open invariant neighborhood of $x_0$, denoted $N$, and a $G$-contracting homotopy $H:N\times [0,1]\to X$ with $H_0=\text{incl}_N$, $H_1(N)\subset O(x_0)=\{x_0\}$, and $H(x_0,t)=x_0$, for all $t\in [0,1]$.

Without loss of generality, we can assume that $x_0\in U_j$, for $j=1,\ldots,k$, for some $1\leq k\leq n-1$, and $x_0\notin U_j$, for $j=k+1,\ldots,n$. Define a new $G$-contracting open invariant neighborhood of $x_0$ by \[
\mathcal{N} = N\cap U_1\cap\cdots\cap U_k\cap(X-\overline{W}_{k+1})\cap\cdots\cap(X-\overline{W}_{n})\subset N.
\] Note that $x_0\in \mathcal{N}$ (since $x_0\in X-U_j\subset X-\overline{W}_{j}$ for $j=k+1,\ldots,n$) and $\mathcal{N}\cap W_j=\emptyset$ for all $j=k+1,\ldots,n$.

Now, again by the $G$-normality of $X$ (see Proposition~\ref{primera-prop-G-normal}(1)), for the closed invariant set $\{x_0\}\subset\mathcal{N}$ and the open invariant set $\mathcal{N}$, there exists an open invariant set $M$ of $X$ with $x_0\in M\subset \overline{M}\subset \mathcal{N}$. Note that $\mathcal{N}\subset U_j$, for each $j=1,\ldots,k$.

Now, we can define an open $G$-categorical cover of $X$ that satisfies the conclusions of the proposition. Let \[
V_j :=\begin{cases}
    (U_j\cap (X-\overline{M}))\cup M, & \hbox{for $j=1,\ldots,k$;} \\
   W_j\cup \mathcal{N}, & \hbox{for $j=k+1,\ldots,n$.}
  \end{cases}
 \] Note that $\{V_j\}_{j=1}^{n}$ covers $X$. This follows because \[[(U_j\cap (X-\overline{M}))\cup M]\cup (\overline{M}-M)=U_j\] and $\overline{M}\subset \mathcal{N}$ implies that \[[(U_j\cap (X-\overline{M}))\cup M]\cup \mathcal{N}\supset U_j.\] Moreover, since $W_j\subset U_j$, we have that  \[[(U_j\cap (X-\overline{M}))\cup M]\cup \mathcal{N}\supset W_j,\] for each $j=1,\ldots,k$. Thus, $\bigcup_{j=1}^nV_j\supset \bigcup_{j=1}^n W_j$, and since $\{W_j\}_{j=1}^{n}$ covers $X$, we conclude that $\{V_j\}_{j=1}^{n}$ covers $X$. Furthermore, $\{V_j\}_{j=1}^{n}$ is a covering by open invariant sets. Note that $x_0\in V_j$, for all $j=1,\ldots,n$, since $x_0\in M$ and $x_0\in \mathcal{N}$. 
 
 Moreover, each $V_j$, for $j=1,\ldots,n$ consists of two disjoint open invariant subsets: one subset of $U_j$ not containing the basepoint $x_0$, and one subset of $\mathcal{N}$ containing the basepoint. This allows us to define a $G$-contracting homotopy: for $j=1,\ldots,k$, define $T_j:V_j\times [0,1]\to X$ by
 \[ T_j(x,t) :=\left\{
  \begin{array}{ll}
   H_j(x,t), & \hbox{ if $x\in U_j\cap (X-\overline{M})$;} \\
   H(x,t), & \hbox{ if $x\in M$.}
  \end{array}
\right.\] We observe that $T_j$ is well-defined, invariant, and it is a homotopy between the inclusion $\text{incl}_{V_j}$ and the constant map $\overline{x_0}:[0,1]\to X$ with $\overline{x_0}(t)=x_0,\forall t\in [0,1]$. Furthermore, for any $t\in [0,1]$, we have $T_i(x_0,t)=H(x_0,t)$, since $x_0\in M$. 

For $j=k+1,\ldots,n$, we define  $T_j:V_j\times [0,1]\to X$ by
 \[ T_j(x,t) :=\left\{
  \begin{array}{ll}
   H_j(x,t), & \hbox{ if $x\in W_j$;} \\
   H(x,t), & \hbox{ if $x\in \mathcal{N}$.}
  \end{array}
\right.\] Note that $T_j$ is well-defined, invariant, and it is a homotopy between the inclusion $\text{incl}_{V_j}$ and the constant map $\overline{x_0}:[0,1]\to X,~\overline{x_0}(t)=x_0,\forall t\in [0,1]$. Furthermore, for any $t\in [0,1]$, we have $T_j(x_0,t)=H(x_0,t)$, since $x_0\in\mathcal{N}$. 

\end{proof}

Thus, we introduce the following definition.

\begin{definition}[Based $G$-categorical covering]\label{defn-based-g-cat}
An open $G$-categorical covering $V_1,\ldots,V_n$ of $X$ such that $x_0\in V_i$, for all $i=1,\ldots,n$, and each $V_i$ is $G$-contractible to $O(x_0)=\{x_0\}$ relative to $x_0$, is called a \textit{based $G$-categorical covering}.
\end{definition}

For $G$-spaces $X$ and $Y$, we will assume that $G$ acts diagonally on $X\times Y$, meaning that for all $(x,y)\in X\times Y$ and for all $g\in G$, we have  $g(x,y):=(gx,gy)$. If $x_0\in X^G$ and $y_0\in Y^G$, the wedge sum $X\vee Y=X\times\{y_0\}\cup\{x_0\}\times Y\subset X\times X$ is an invariant subset of the $G$-space $X\times Y$. Furthermore, note that for any $x\in X$ and $y\in Y$, the orbits in $X\vee Y$ are $O(x,y_0)=O(x)\times\{y_0\}$ and $O(x_0,y)=\{x_0\}\times O(y)$. In addition, observe that the wedge $X\vee Y$ coincides with the space obtained from the disjoint union $X\sqcup Y$ by identifying the basepoints $x_0$ and $y_0$.

\begin{proposition}\label{prop:lower-bound-equiv-wedge}\rm{
  Let $X$ and $Y$ be $G$-spaces with $x_0\in X^G$ and $y_0\in Y^G$. Then, the wedge $X\vee Y$ satisfies the following inequality:
  \[\max\{\text{cat}_G(X),\text{cat}_G(Y)\}\leq \text{cat}_G(X\vee Y).\]}
\end{proposition}
\begin{proof}
    Note that there are $G$-retractions $r_X:X\vee Y\to X$ and $r_Y:X\vee Y\to Y$. Indeed, the map $r_X:X\vee Y\to X$ is given by  
\[
r_X(x)=\left\{
  \begin{array}{ll}
    x, & \hbox{if $x\in X$;} \\
    x_0, & \hbox{if $x\in Y$.} 
    \end{array}
\right.
\] Note that $r_X$ is well-defined, is equivariant, and $r_X(z)=z$, for all $z\in X$. Hence, $r_X$ is a $G$-retraction. Similarly, the map $r_Y:X\vee Y\to Y$, given by \[
r_Y(y)=\left\{
  \begin{array}{ll}
    y_0, & \hbox{if $y\in X$;} \\
    y, & \hbox{if $y\in Y$.} 
    \end{array}
\right.
\] is a $G$-retraction. By Proposition \ref{prop-G-domina}, we then have $\text{cat}_G(X)\leq \text{cat}_G(X\vee Y)$ and $\text{cat}_G(Y)\leq \text{cat}_G(X\vee Y)$. Thus, \[\max\{\text{cat}_G(X),\text{cat}_G(Y)\}\leq \text{cat}_G(X\vee Y).\]
\end{proof}

Then, we state one of the main purposes of this paper. This claim is the equivariant version of \cite[Proposition 1.27, pg. 14]{cornea2003lusternik}. In the next section, we will use this theorem to compute the equivariant and invariant topological complexities for $G$-connected $G$-CW-complexes with $X^G\neq\varnothing$. 

\begin{theorem}\label{principal}\rm{
If $X,Y$ are $G$-connected, $G$-normal spaces with $G$-non-degenerate basepoints, then
\[
\text{cat}_G(X\vee Y)=\max\{\text{cat}_G(X),\text{cat}_G(Y)\},
\] where $X\vee Y$ is the wedge of the disjoint sets $X$ and $Y$, obtained by identifying their basepoints.}
\end{theorem}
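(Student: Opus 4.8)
The plan is to prove the two inequalities
$$cat_G(X\vee Y)\geq \max\{cat_G(X),cat_G(Y)\}\qquad\text{and}\qquad cat_G(X\vee Y)\leq \max\{cat_G(X),cat_G(Y)\}$$
separately. For the lower bound I would use that $X$ and $Y$ are equivariant retracts of $X\vee Y$. Consider the $G$-map $r_X:X\vee Y\longrightarrow X$ that is the identity on $X\times\{y_0\}$ and collapses $\{x_0\}\times Y$ to $x_0$; this is equivariant precisely because $x_0\in X^G$, so that $r_X(g(x_0,y))=x_0=gx_0$. Together with the inclusion $i_X:X\longrightarrow X\vee Y$, $x\mapsto(x,y_0)$, it satisfies $r_X\circ i_X=id_X$, hence $X\vee Y$ $G$-dominates $X$; by Proposition \ref{prop-G-domina} we obtain $cat_G(X\vee Y)\geq cat_G(X)$, and symmetrically $cat_G(X\vee Y)\geq cat_G(Y)$. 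Taking the maximum gives the lower inequality. This direction is routine once the equivariance of the projections is observed.

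For the upper bound, set $n=\max\{cat_G(X),cat_G(Y)\}$, so that $cat_G(X)\leq n$ and $cat_G(Y)\leq n$. The hypotheses that $X,Y$ are $G$-orbit connected, $G$-normal, and $G$-well-pointed are exactly what is needed to invoke Proposition \ref{G-categorico-baseado}, which I would apply to each factor to obtain \emph{based} $G$-categorical covers $\{A_i\}_{i=1}^n$ of $X$ and $\{B_i\}_{i=1}^n$ of $Y$, with $x_0\in A_i$, $y_0\in B_i$, together with $G$-contracting homotopies $H_i^X:A_i\times I\longrightarrow X$ and $H_i^Y:B_i\times I\longrightarrow Y$ ending at the basepoint and fixing it for all time: $H_i^X(x_0,t)=x_0$ and $H_i^Y(y_0,t)=y_0$. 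The candidate cover of $X\vee Y$ is then the family of sub-wedges $W_i:=(A_i\times\{y_0\})\cup(\{x_0\}\times B_i)$.

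Three verifications remain. First, each $W_i$ is open and invariant in $X\vee Y$: invariance is immediate, while openness follows from the identity $W_i=(A_i\times B_i)\cap(X\vee Y)$ (using $x_0\in A_i$ and $y_0\in B_i$), since $A_i\times B_i$ is open in $X\times Y$ and $X\vee Y$ carries the subspace topology. Second, $\{W_i\}_{i=1}^n$ covers $X\vee Y$, because any point $(x,y_0)$ lies in some $A_i\times\{y_0\}$ and any $(x_0,y)$ lies in some $\{x_0\}\times B_i$. Third, each $W_i$ is $G$-categorical: I would glue the two contractions via
$$K_i(w,t):=\begin{cases}(H_i^X(x,t),y_0), & w=(x,y_0),\\ (x_0,H_i^Y(y,t)), & w=(x_0,y),\end{cases}$$
which starts at the inclusion $i_{W_i}$, is a $G$-map because $H_i^X$ and $H_i^Y$ are, and ends inside the single orbit $O(x_0,y_0)=\{(x_0,y_0)\}$, so that $W_i$ is $G$-categorical. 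Continuity follows from the pasting lemma, as the two pieces $A_i\times\{y_0\}$ and $\{x_0\}\times B_i$ are closed in $W_i$ (here $X,Y$ Hausdorff is used). Combining the two inequalities then yields the theorem.

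The main obstacle — and the reason the full strength of Proposition \ref{G-categorico-baseado} is essential rather than merely a $G$-categorical cover — is the compatibility of the two contractions on the overlap $(A_i\times\{y_0\})\cap(\{x_0\}\times B_i)=\{(x_0,y_0)\}$. The definition of $K_i$ is consistent there only because $H_i^X(x_0,t)=x_0$ and $H_i^Y(y_0,t)=y_0$ for all $t$, so both branches return $(x_0,y_0)$; without the \emph{based} property (contractions relative to the basepoint) the two formulas would disagree at $(x_0,y_0)$ and the glued homotopy would fail to be well-defined. Thus the key step is arranging, via the $G$-non-degenerate basepoint hypothesis, that the equivariant contractions can be taken rel basepoint.
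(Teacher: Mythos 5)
Your proposal is correct and follows essentially the same route as the paper: the lower bound via the equivariant retractions $r_X,r_Y$ and Proposition \ref{prop-G-domina}, and the upper bound by taking based $G$-categorical covers from Proposition \ref{G-categorico-baseado} and gluing the sets $U_i\cup V_i$ pairwise inside the wedge, with the rel-basepoint property ensuring the glued homotopies are well defined. The only (cosmetic) difference is that you pad both covers to the common size $\max\{cat_G(X),cat_G(Y)\}$, whereas the paper keeps covers of sizes $n\leq m$ and reuses $U_n$ for the indices $i>n$; your verification of openness of $W_i$ via $W_i=(A_i\times B_i)\cap(X\vee Y)$ is in fact slightly more careful than the paper's.
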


\begin{proof}
Let $\text{cat}_G(X)=n$ and $\text{cat}_G(Y)=m$, with based $G$-categorical covers $\{U_i\}_{i=1}^n$ and $\{V_j\}_{j=1}^m$, respectively (see Proposition~\ref{G-categorico-baseado}). Without loss of generality, assume  $n\leq m$. Define
\[W_i:=\left\{
  \begin{array}{ll}
   U_i\cup V_i, & \hbox{ for $i=1,\ldots,n$;} \\
   U_n\cup V_i, & \hbox{ for $i=n+1,\ldots,m$.}
  \end{array}
\right.\] Note that each $W_i$ is open in $X\vee Y$, since $U_i$ is open in $X$ and $V_j$ is open in $Y$, and $U_i\cap V_j=\{x_0=y_0\}$. Each $W_i$ is invariant, as both $U_i$ and $V_j$ are invariant. 

Let $H_i$ and $F_j$ be $G$-contractible homotopies for $U_i$ and $V_j$, respectively. For each $i=1,\ldots,n$, define $T_i:W_i\times [0,1]\to X\vee Y$ by
\[T_i(x,t):=\left\{
  \begin{array}{ll}
   H_i(x,t), & \hbox{ if $x\in U_i$;} \\
   F_i(x,t), & \hbox{ if $x\in V_i$.}
  \end{array}
\right.\]

For each $i=n+1,\ldots,m$, define $T_i:W_i\times [0,1]\to X\vee Y$ by
\[T_i(x,t):=\left\{
  \begin{array}{ll}
   H_n(x,t), & \hbox{ if $x\in U_n$;} \\
   F_i(x,t), & \hbox{ if $x\in V_i$.}
  \end{array}
\right.\] Then, each $T_i$ is well-defined, invariant, and it is a homotopy such that $(T_i)_0=\text{incl}_{W_i}$ and $(T_i)_1(W_i)\subset O(x_0=y_0)=\{x_0=y_0\}$. Hence, $\{W_i\}_{i=1}^m$ is an open $G$-categorical cover of $X\vee Y$. Therefore, $\text{cat}_G(X\vee Y)\leq m=\max\{n,m\}=\max\{\text{cat}_G(X),\text{cat}_G(Y)\}$.

For the reverse inequality, see Proposition~\ref{prop:lower-bound-equiv-wedge}. Consequently, \[\text{cat}_G(X\vee Y)=\max\{\text{cat}_G(X),\text{cat}_G(Y)\}.\]
\end{proof}

A direct consequence of Theorem~\ref{principal} is the following example.

\begin{example}\label{equiv-cat-x-x}\rm{
For any $G$-connected, $G$-normal space $X$ with a $G$-non-degenerate basepoint, by Theorem~\ref{principal}, we have \[\text{cat}_G(X\vee X)=\text{cat}_G(X).\]}
\end{example}

As a first application of Theorem~\ref{principal}, we characterize the $G$-contractibility of the wedge $\bigvee_{i=1}^m X_i$.

\begin{corollary}\label{cor-wedge}
Let $\{X_i\}_{i=1}^m$ be a collection of $G$-connected, $G$-normal spaces with $G$-non-degenerate basepoints. The wedge $\bigvee_{i=1}^m X_i$ is $G$-contractible if and only if each $X_i$ is $G$-contractible, for $i=1,\ldots, m$.
\end{corollary}
\begin{proof}
By Theorem~\ref{principal}, we have $\text{cat}_G(\bigvee_{i=1}^m X_i)=\max\{\text{cat}_G(X_i):~i=1,\ldots,m\}$. Therefore, $\text{cat}_G(\bigvee_{i=1}^m X_i)=1$ if and only if $\text{cat}_G(X_i)=1$, for each $i=1,\ldots, m$.
\end{proof}

A second application of Theorem~\ref{principal} is to compute the equivariant category of the quotient $X/A$ for a $G$-space $X$ and an invariant subset $A$ such that the inclusion $A\hookrightarrow X$ is $G$-homotopic to a constant map $\overline{x_0}:A\to X$ for some $x_0\in X^G$. 

\begin{theorem}\label{prop:equiv-quotient}\rm{
Let $X$ be a $G$-space and $A\subset X$ an invariant subset such that the inclusion map $i:A\hookrightarrow X$ is a $G$-cofibration. If the inclusion map $i:A\hookrightarrow X$ is $G$-homotopic to a constant map $\overline{x_0}:A\to X$, for some $x_0\in X^G$, then $X/A$ is $G$-homotopic to the wedge $X\vee S A$, where $A\times\{0\}$ is the basepoint of $S A$ and $x_0$ is the base point of $X$. 

In addition, if $X$ is a non $G$-contractible, $G$-connected, $G$-normal space with a $G$-non-degenerate basepoint, and if $S A$ is a $G$-connected, $G$-normal space with a $G$-non-degenerate basepoint, then we have \[\text{cat}_G(X/A)=\text{cat}_G(X).\]}
\end{theorem}
\begin{proof}
The statement that $X/A$ is $G$-homotopy equivalent to $X\vee S A$ follows by analogy to \cite[Example 0.14, p.14]{hatcher2002}. Hence, by Proposition~\ref{prop-G-domina}, we obtain that $\text{cat}_G(X/A)=\text{cat}_G(X\vee S A)$. From Theorem~\ref{principal}, we conclude that $\text{cat}_G(X/A)=\text{cat}_G(X)$, since  $\text{cat}_G(X)\geq 2$ (using the fact that $X$ is not $G$-contractible) and $\text{cat}_G(S A)\leq 2$ (see Example~\ref{exam:1-2}(3)).
\end{proof}

We can consider more general product actions. 

\begin{remark}\label{rem:product-action}
Let $K$ be another compact Hausdorff group. Then, the product of a $G$-space $X$ and a $K$-space $Y$ becomes a $G\times K$-space in an obvious way, namely, $(g,k)(x,y):=(gx,ky)$, for all $x\in X$, $y\in Y$, $g\in G$, and $k\in K$. If $x_0\in X^G$ and $y_0\in Y^K$, the wedge $X\vee Y=X\times\{y_0\}\cup\{x_0\}\times Y\subset X\times X$ is an invariant subset of the $G\times K$-space $X\times Y$. Indeed, for any $x\in X$, $y\in Y$, and any $g\in G$, $k\in K$, we have $(g,k)(x,y_0)=(gx,ky_0)=(gx,y_0)\in X\vee Y$ and $(g,k)(x_0,y)=(gx_0,ky)=(x_0,ky)\in X\vee Y$. Hence, $X\vee Y$ is a $G\times K$-space. Furthermore, note that, for any $x\in X$ and $y\in Y$, the orbits in $X\vee Y$ are $O(x,y_0)=O(x)\times\{y_0\}$ and $O(x_0,y)=\{x_0\}\times O(y)$. Here, $O(x)$ denotes the orbit of $x$ in the $G$-space $X$, and $O(y)$ denotes the orbit of $y$ in the $K$-space $Y$. 
\end{remark}

Hence, in a similar way to Theorem \ref{principal}, one can obtain the following result.

\begin{proposition}\label{prop:equiv-cat-product}\rm{
If $X$ is a $G$-connected, $G$-normal space with a $G$-non-degenerate basepoint, and if $Y$ is a $K$-connected, $K$-normal space with a $K$-non-degenerate basepoint, then
\[
\text{cat}_{G\times K}(X\vee Y)=\max\{\text{cat}_G(X),\text{cat}_K(Y)\},
\] where $X\vee Y$ is the wedge of the disjoint sets $X$ and $Y$, obtained by identifying their basepoints.}
\end{proposition}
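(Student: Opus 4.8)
The plan is to mirror the proof of Theorem~\ref{principal} almost verbatim, the one genuinely new ingredient being a comparison of equivariant categories when a factor of the acting group acts trivially. The first step I would record is the following reduction. Regard $X$ as a $G\times K$-space on which $K$ acts trivially, and regard $Y$ as a $G\times K$-space on which $G$ acts trivially. Because $K$ acts trivially on $X$, a subset of $X$ is $G\times K$-invariant precisely when it is $G$-invariant, a map between such spaces is a $G\times K$-map precisely when it is a $G$-map, and a $G\times K$-homotopy is exactly a $G$-homotopy; moreover the $G\times K$-orbit of $x\in X$ coincides with $O(x)$. Consequently the notions of $G\times K$-categorical and $G$-categorical open set agree, and hence
\[
cat_{G\times K}(X)=cat_G(X),\qquad cat_{G\times K}(Y)=cat_K(Y).
\]
This identity is what allows the two separate group actions to be compared inside the single group $G\times K$.

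For the inequality $cat_{G\times K}(X\vee Y)\leq\max\{cat_G(X),cat_K(Y)\}$ I would proceed exactly as in Theorem~\ref{principal}. Apply Proposition~\ref{G-categorico-baseado} to the $G$-space $X$ and to the $K$-space $Y$ to obtain based categorical covers $\{U_i\}_{i=1}^n$ and $\{V_j\}_{j=1}^m$ with $n=cat_G(X)$ and $m=cat_K(Y)$; here the separate hypotheses on $X$ and on $Y$ are used. The key observation is that each $U_i$, viewed as $U_i\times\{y_0\}\subseteq X\vee Y$, is $G\times K$-invariant, since $(g,k)(x,y_0)=(gx,ky_0)=(gx,y_0)$ because $y_0\in Y^K$; symmetrically for each $V_j$. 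Likewise each $G$-contracting homotopy $H_i$ of $U_i$ becomes a $G\times K$-contracting homotopy in $X\vee Y$, because $K$ acts trivially along $X$ and the basepoint $(x_0,y_0)$ is fixed by $G\times K$ (as $x_0\in X^G$ and $y_0\in Y^K$); the orbits decompose as $O(x,y_0)=O(x)\times\{y_0\}$ and $O(x_0,y)=\{x_0\}\times O(y)$ exactly as in the Remark. One then sets, assuming $n\leq m$,
\[
W_i:=\left\{
\begin{array}{ll}
U_i\cup V_i, & i=1,\ldots,n;\\
U_n\cup V_i, & i=n,\ldots,m,
\end{array}
\right.
\]
and glues $H_i$ with $F_i$ to a homotopy $T_i$ on $W_i$; these are well defined because $U_i$ and $V_i$ meet only in the common basepoint and both homotopies fix it. This produces an open $G\times K$-categorical cover of size $m$, giving the upper bound.

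For the reverse inequality I would use the $G\times K$-retractions $r_X:X\vee Y\longrightarrow X$ and $r_Y:X\vee Y\longrightarrow Y$ collapsing the complementary wedge summand to the basepoint, the very maps written in Theorem~\ref{principal}; each is equivariant because the collapsed summand is sent to the $(G\times K)$-fixed point $x_0$, respectively $y_0$. By Proposition~\ref{prop-G-domina} these retractions give $cat_{G\times K}(X)\leq cat_{G\times K}(X\vee Y)$ and $cat_{G\times K}(Y)\leq cat_{G\times K}(X\vee Y)$. Combining with the identifications $cat_{G\times K}(X)=cat_G(X)$ and $cat_{G\times K}(Y)=cat_K(Y)$ from the first step yields $\max\{cat_G(X),cat_K(Y)\}\leq cat_{G\times K}(X\vee Y)$, and the two inequalities together prove the claim.

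The step I expect to require the most care is the identification $cat_{G\times K}(X)=cat_G(X)$ and its $K$-analogue: one must check that passing to the trivial $K$-action changes neither the invariant open sets, nor the equivariant maps, nor the homotopies, so that a $G$-categorical cover is literally a $G\times K$-categorical cover and conversely. Everything else is a transcription of the proof of Theorem~\ref{principal}, with $G$ replaced by $G\times K$ and the bookkeeping of orbits handled by the Remark preceding the statement.
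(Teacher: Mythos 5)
Your proposal is correct and follows exactly the route the paper intends: the paper gives no written proof of this proposition, only the preceding remark that the argument is similar to Theorem~\ref{principal}, and your identification $cat_{G\times K}(X)=cat_G(X)$ (and $cat_{G\times K}(Y)=cat_K(Y)$) for the trivially-acting factor is precisely the observation needed to make that analogy rigorous. The remaining steps are a faithful transcription of the wedge argument, so nothing further is required.
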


Recall that a pointed $G$-space means a $G$-space with the distinguished basepoint fixed by $G$.  

\begin{remark}\label{rem:prod-lower-bound}
Note that the inequality \[\max\{\text{cat}_G(X),\text{cat}_K(Y)\}\leq \text{cat}_{G\times K}(X\vee Y)\] holds for any pointed $G$-space $X$ and pointed $K$-space $Y$. 
\end{remark}

On the other hand, it is natural to inquire about the equivariant category of the smash product. For this, we have the following remark.

\begin{remark}[Smash product]\label{rem:smash-pro}
For the $G$-spaces $X$ and $Y$, we recall that $G$ acts diagonally on $X\times Y$. If $x_0\in X^G$ and $y_0\in Y^G$, the wedge $X\vee Y=X\times\{y_0\}\cup\{x_0\}\times Y\subset X\times X$ is an invariant subset of the $ G$-space $X\times Y$, and the smash product \[X\wedge Y=\dfrac{X\times Y}{X\vee Y}\] is a $G$-space. The value of the equivariant category $\operatorname{cat}_G(X \wedge Y)$ is not known for arbitrary pointed $G$-spaces $X$ and $Y$. However, when $Y = S^k$ (for $k \geq 0$) equipped with the trivial $G$-action, we have the following result.
\end{remark}

\begin{theorem}\label{thm:equiv-smash}\rm{
Let $H_q(-; \mathbb{Z})$ denote the $q$th singular homology group. Let $X$ be a pointed $G$-space such that $H_q(X;\mathbb{Z})\neq 0$, for some $q\geq 1$, and let $S^k$ be the $k$-dimensional sphere ($k\geq 0$) 
 equipped with the trivial $G$-action. Assume that $X \wedge S^k$ is $G$-connected for $k \geq 1$. Then:
\[
\text{cat}_G(X\wedge S^k)=\begin{cases}
\text{cat}_G(X),& \hbox{ for $k=0$,}\\
2,& \hbox{ for $k\geq 1$.}
\end{cases}
\] }
\end{theorem}

\begin{proof}
Recall that $G$ acts trivially on $S^k$. It is a technical procedure to check that there exists a $G$-equivariant homeomorphism between $X\wedge S^0$ and $X$. Hence, by Proposition~\ref{prop-G-domina}, we have $\text{cat}_G(X\wedge S^0)=\text{cat}_G(X)$. 

For the case $k\geq 1$, it is also a technical procedure to check that there exists a $G$-equivariant homeomorphism between $X\wedge S^k$ and the iterated reduced suspension $\Sigma^k X$. Hence, by Proposition~\ref{prop-G-domina}, we have $\text{cat}_G(X\wedge S^k)=\text{cat}_G(\Sigma^k X)$.  Note that $\text{cat}_G(\Sigma^k X)\leq 2$. Then, we have \begin{eqnarray*}
2 &\leq& \text{cat}(\Sigma^k X)\\
&\leq& \text{cat}_G(\Sigma^k X)\\
&\leq& 2.
\end{eqnarray*} The first inequality follows from the fact that $H_q(X;\mathbb{Z})\neq 0$, for some $q\geq 1$ (using the Mayer–Vietoris sequence, cf. \cite[Lemma 4.10, p 36]{zapata2018}).
\end{proof}

\section{Equivariant and Invariant Topological Complexities}\label{sec:equiv-inv-tc}

In this section, we discuss the equivariant and invariant topological complexities of a wedge (Theorem~\ref{prop:tc-2cat-1}) and the quotient $X/A$ (Example~\ref{exam:equiv-inv-x-a}). For this purpose, we use Theorem~\ref{principal}. Additionally, we also consider the smash product $X\wedge S^k$ (Example~\ref{exam:equiv-inv-tc-smash}).  

\medskip Let $X$ be a $G$-space. Let $X^{[0,1]}$ denote the space of all continuous paths in $X$, equipped with the compact-open topology, and let $e_2^X:X^{[0,1]}\to X\times X$ be the fibration given by $e_2^X(\gamma)=\left(\gamma(0),\gamma(1)\right)$. Note that $X^{[0,1]}$ is a $G$-space via the pointwise action $(g\gamma)(t):=g\gamma(t)$, for any $g\in G$, $\gamma\in X^{[0,1]}$, and $t\in [0,1]$. The Cartesian product $X\times X$ is also a $G$-space via the diagonal action $g(x,y)=(gx,gy)$. Hence, $e_2^X:X^{[0,1]}\to X\times X$ is a $G$-map (in fact, it is a $G$-fibration, see \cite[p. 2308]{colman2013equivariant}). 

\medskip On the other hand, consider the space \[X^{[0,1]}\times_{X/G} X^{[0,1]}:=\{(\alpha,\gamma)\in X^{[0,1]}\times X^{[0,1]}:~G\alpha(1)=G\gamma(0)\}.\] Note that $G\times G$ acts on $X^{[0,1]}\times_{X/G} X^{[0,1]}$ via $(g,h)(\alpha,\gamma):=(g\alpha,h\gamma)$, and on $X\times X$ via $(g,h)(x,y)=(gx,hy)$. Hence, the map $p_X:X^{[0,1]}\times_{X/G} X^{[0,1]}\to X\times X$, given by $p_X(\alpha,\gamma):=(\alpha(0),\gamma(1))$, is a $G\times G$-map (indeed, it is a $G\times G$-fibration, see \cite[Proposition 3.7, p. 110]{lubawski2015}).

\begin{definition}[Equivariant and invariant topological complexities]\label{defn:equiv-inv-tc}
Let $X$ be a $G$-space.
\begin{enumerate}
    \item[(1)] \cite[Definition 5.1, p. 2309]{colman2013equivariant} The \textit{equivariant topological complexity} of $X$, denoted by $\text{TC}_G(X)$, is the smallest integer $k$ such that $X\times X$ can be covered by $G$-invariant open sets $U_1,\ldots,U_k\subset X\times X$, each which admitting a $G$-map $s_i:U_i\to X^{[0,1]}$ such that $e_2^X\circ s_i=\text{incl}_{U_i}$. If no such integer $k$ exists,  then we set $\text{TC}_G(X)=\infty$.
    \item[(2)] \cite[p. 110]{lubawski2015} The \textit{invariant topological complexity} of $X$, denoted by $\text{TC}^G(X)$, is the smallest integer $k$ such that $X\times X$ may be covered by $G\times G$-invariant open sets $U_1,\ldots,U_k\subset X\times X$, each which admitting a $G\times G$-map $s_i:U_i\to X^{[0,1]}\times_{X/G} X^{[0,1]}$ such that $p_X\circ s_i=\text{incl}_{U_i}$. If no such integer $k$ exists,  then we set $\text{TC}^G(X)=\infty$.
\end{enumerate}
\end{definition}

For convenience, we record the following standard properties.

\begin{lemma}\label{lem:equi-inv-prop}\rm{
    \noindent\begin{enumerate}
        \item[(1)] \cite[Theorem 5.2, p. 2309]{colman2013equivariant}, \cite[Proposition 3.25, p. 115]{lubawski2015} If $X$ $G$-dominates $Y$, then $\text{TC}_G(X)\geq \text{TC}_G(Y)$ and $\text{TC}^G(X)\geq \text{TC}^G(Y)$. In particular, if $X\simeq_G Y$, then $\text{TC}_G(X)=\text{TC}_G(Y)$ and $\text{TC}^G(X)=\text{TC}^G(Y)$.
        \item[(2)] \cite[Corollary 5.8, p. 2310]{colman2013equivariant}, \cite[Remark 3.24, p. 114]{lubawski2015} If $X$ is a $G$-connected $G$-CW-complex with $X^G\neq\varnothing$, then \[\max\{\text{TC}_G(X),\text{TC}^G(X)\}\leq 2\text{cat}_G(X)-1.\] 
        \item[(3)] \cite[Corollary 5.4(1), p. 2310]{colman2013equivariant}, \cite[Corollary 3.26, p. 115]{lubawski2015} If $X$ is a $G$-space, then \[\text{TC}(X^G)\leq\min\{\text{TC}_G(X),\text{TC}^G(X)\}.\]
    \end{enumerate} }
\end{lemma}

Theorem~\ref{prop:equiv-quotient} shows that the equality $\text{cat}_G(X/A)=\text{cat}_G(X)$ holds under the condition that the inclusion map $A\hookrightarrow X$ is $G$-homotopic to a constant map $\overline{x_0}:A\to X$ for some $x_0\in X^G$. We can obtain similar formulas for the equivariant and invariant topological complexities under the condition that $A$ is $G$-contractible to a fixed point $x_0\in X^G$. For this purpose, we show the equivariant version of \cite[Proposition 0.17, p. 15]{hatcher2002}. As a direct consequence, we compute the equivariant category, and the equivariant and invariant topological complexities of $X/A$. 

\begin{proposition}\label{prop:quaotient-map-equivalence}\rm{
    Let $X$ be a $G$-space and $A\subset X$ be an invariant subset such that the inclusion map $i:A\hookrightarrow X$ is a $G$-cofibration. If $A$ is $G$-contractible to a fixed point $x_0\in X^G$, then the quotient map $q:X\to X/A$ is a $G$-homotopy equivalence. In particular, we have  $\text{cat}_G(X/A)=\text{cat}_G(X)$,  $\text{TC}_G(X/A)=\text{TC}_G(X)$, and $\text{TC}^G(X/A)=\text{TC}^G(X)$.}  
\end{proposition}
\begin{proof}
    Let $H:A\times I\to A$ be a $G$-homotopy such that $H_0=1_A$ and $H_1=\overline{x_0}$. Since $i:A\hookrightarrow X$ is a $G$-cofibration, there exists a $G$-homotopy $\widehat{F}:X\times I\to X$ such that the following diagram
 \begin{eqnarray*}
\xymatrix{ A \ar@{^{(}->}[rr]^{\,\,i} \ar[dd]_{\iota_0}  &   &  X\ar[dd]^{1_X}\ar[dl]_{\iota_0}\\
& X\times [0,1] \ar@{-->}[dr]^{\,\,\widehat{F}}  &  \\
 A\times [0,1]\ar@{^{(}->}[ur]^{\,\,i\times 1_I}\ar[rr]_{F} &    & X}
\end{eqnarray*} is commutative, where $F=i\circ H$ and $\iota_0(x)=(x,0)$, for all $x$. Note that the map $\widehat{F}_1:X\to X$ satisfies $\widehat{F}_1(a)=x_0$,  for any $a\in A$. Then, we obtain a $G$-map $h:X/A\to X$ given by $h(q(x)):=\widehat{F}_1(x)$, for any $x\in X$. Hence, $h\circ q\simeq_G 1_X$. 

Furthermore, note that the $G$-homotopy $\widehat{F}:X\times [0,1]\to X$ satisfies $\widehat{F}(a,t)\in A$, for any $a\in A$ and any $t\in [0,1]$. Then, we obtain a $G$-homotopy $\varphi:X/A\times [0,1]\to X/A$ given by $\varphi(q(x),t):=q(\widehat{F}(x,t))$, for any $x\in X$ and $t\in [0,1]$. Note that $\varphi_0=1_{X/A}$ and $\varphi_1=q\circ h$, so $q\circ h\simeq_G 1_{X/A}$.

Therefore, $q:X\to X/A$ is a $G$-homotopy equivalence. By Proposition~\ref{prop-G-domina}, we conclude that the equality $\text{cat}_G(X)=\text{cat}_G(X/A)$ holds. Furthermore, by Lemma~\ref{lem:equi-inv-prop}(1), we have $\text{TC}_G(X/A)=\text{TC}_G(X)$ and $\text{TC}^G(X/A)=\text{TC}^G(X)$.
\end{proof}

 Given $G$-spaces $X$ and $Y$ with basepoints $x_0\in X^G$ and $y_0\in Y^G$, recall that the wedge $X\vee Y$ is a $G$-invariant and a $G\times G$-invariant subset of $X\times X$. Additionally, observe that $_{X\vee Y}\text{cat}_{G}(X\times Y)\leq _{X\vee Y}\text{cat}_{G\times G}(X\times Y)$. 

\medskip Now, we present the equivariant and invariant version of \cite[Theorem 3.6, p. 4371]{dranishnikov2014}. 

\begin{theorem}\label{thm:lower-bound-tc-wedge}\rm{
  Let $X$ and $Y$ be $G$-spaces and $x_0\in X^G$, $y_0\in Y^G$ be basepoints. Then, the wedge $X\vee Y$ satisfies the following inequalities:
  \[\max\{\text{TC}_G(X),\text{TC}_G(Y),\text{cat}_G(X\times Y)\}\leq \text{TC}_G(X\vee Y),\] and \[\max\{\text{TC}^G(X),\text{TC}^G(Y),_{X\vee Y}\text{cat}_{G\times G}(X\times Y)\}\leq \text{TC}^G(X\vee Y).\]}
\end{theorem}
\begin{proof}
    Recall that there are $G$-retractions ($G\times G$-retractions, respectively) $r_X:X\vee Y\to X$ and $r_Y:X\vee Y\to Y$ (see the proof of Proposition~\ref{prop:lower-bound-equiv-wedge}). Then, by Lemma~\ref{lem:equi-inv-prop}(1), we have \[\max\{\text{TC}_G(X),\text{TC}_G(Y)\}\leq \text{TC}_G(X\vee Y)\] (and similarly, $\max\{\text{TC}^G(X),\text{TC}^G(Y)\}\leq \text{TC}^G(X\vee Y)$, respectively).

On the other hand, observe that \[
X\times Y\subset \left(X\vee Y\right)\times \left(X\vee Y\right).
\] \begin{enumerate}
    \item[(1)] Given a $G$-invariant open set $U\subset \left(X\vee Y\right)\times \left(X\vee Y\right)$ together with a $G$-map $s:U\to \left(X\vee Y\right)^{[0,1]}$ such that $e_2^{X\vee Y}\circ s(u)=u$, for all $u\in U$. Consider the $G$-map $H:U\times [0,1]\to X\vee Y$, defined by $H\left((a,b),t\right):=s(a,b)(t)$. Note that $H\left((a,b),0\right)=a$ and $H\left((a,b),1\right)=b$.

Now, let $V:=U\cap \left(X\times Y\right)$ be a $G$-invariant open subset of $X\times Y$. Define the $G$-map $F:V\times [0,1]\to X\times Y$ by \[
F\left((a,b),t\right):=\left(r_X\circ H\left((a,b),t\right),r_Y\circ H\left((a,b),1-t\right)\right), 
\] for all $(a,b)\in V$ and $t\in [0,1]$. Note that, for any $(a,b)\in V$, we have \[F\left((a,b),0\right)=(a,b) \quad \text{ and } \quad F\left((a,b),1\right)=x_0(=y_0).\] This shows that $V$ is an open $G$-categorical set of $X\times Y$, so we conclude that \[\text{TC}_G(X\vee Y)\geq \text{cat}_G(X\times Y).\]

Therefore, we obtain \[\max\{\text{TC}_G(X),\text{TC}_G(Y),\text{cat}_G(X\times Y)\}\leq \text{TC}_G(X\vee Y).\]
\item[(2)] Given a $G\times G$-invariant open set $U\subset \left(X\vee Y\right)\times \left(X\vee Y\right)$ together with a $G\times G$-map $s:U\to \left(X\vee Y\right)^{[0,1]}\times_{(X\vee Y)/G} \left(X\vee Y\right)^{[0,1]}$ such that $p_{X\vee Y}\circ s(u)=u$, for all $u\in U$, consider the coordinate maps $\sigma,\rho:U\to \left(X\vee Y\right)^{[0,1]}$, such that $s(u)=(\sigma(u),\rho(u))$, for any $u\in U$. The fact that $s$ is a $G\times G$-map implies that \[\sigma(ga,hb)=g\sigma(a,b) \quad \text{ and } \quad \rho(ga,hb)=h\rho(a,b),\] for any $(g,h)\in G\times G$ and $(a,b)\in U$. Define the maps $H, F:U\times [0,1]\to X\vee Y$ by \[H\left((a,b),t\right):=\sigma(a,b)(t) \quad \text{ and } \quad F\left((a,b),t\right):=\rho(a,b)(t).\] Note the following: \begin{align*}
   H\left((a,b),0\right)&=a,\\ 
   F\left((a,b),1\right)&=b,\\
   G\left( H\left((a,b),1\right)\right)&=G\left( F\left((a,b),0\right)\right).
\end{align*} 
 Observe that $V:=U\cap \left(X\times Y\right)$ is a $G\times G$-invariant open subset of $X\times Y$. Define the $G\times G$-map $L:V\times [0,1]\to X\times Y$ by \[
L\left((a,b),t\right):=\left(r_X\circ H\left((a,b),t\right),r_Y\circ F\left((a,b),1-t\right)\right),\] for all $(a,b)\in V$ and $t\in [0,1]$. Note that for any $(a,b)\in V$, we have \[L\left((a,b),0\right)=(a,b) \quad \text{ and } \quad L_1\left(V\right)\subset X\vee Y.\] This shows that $\text{TC}^G(X\vee Y)\geq _{X\vee Y}\text{cat}_{G\times G}(X\times Y)$.

Therefore, we conclude \[\max\{\text{TC}^G(X),\text{TC}^G(Y),_{X\vee Y}\text{cat}_{G\times G}(X\times Y)\}\leq \text{TC}^G(X\vee Y).\] 
\end{enumerate} 
\end{proof}

As another application of Theorem~\ref{principal} together with Theorem~\ref{thm:lower-bound-tc-wedge}, we obtain the following statement, which is the equivariant and invariant case of \cite[Proposição 4.7.53, p. 177]{zapata2022}.

\begin{theorem}\label{prop:tc-2cat-1}\rm{
 Let $X$ and $Y$ be $G$-connected $G$-CW-complexes such that $X^G\neq\varnothing$ and $Y^G\neq\varnothing$. Suppose that $\text{cat}_G(X)\geq \text{cat}_G(Y)$.
 \begin{enumerate}
     \item[(1)] If $\text{TC}_G(X)=2\text{cat}_G(X)-1$, then \[\text{TC}_G(X\vee Y)=\max\{\text{TC}_G(X),\text{TC}_G(Y),\text{cat}_G(X\times Y)\}=\text{TC}_G(X).\] Furthermore, $\text{TC}_G(X\vee Y)=2\text{cat}_G(X\vee Y)-1$.
     
      \item[(2)] If $\text{TC}^G(X)=2\text{cat}_G(X)-1$, then \[\text{TC}^G(X\vee Y)=\max\{\text{TC}^G(X),\text{TC}^G(Y),_{X\vee Y}\text{cat}_{G\times G}(X\times Y)\}=\text{TC}^G(X).\] Furthermore, $\text{TC}^G(X\vee Y)=2\text{cat}_G(X\vee Y)-1$.
 \end{enumerate} }   
\end{theorem}
\begin{proof}
  By Lemma~\ref{lem:equi-inv-prop}(2), we know that $\text{TC}_G(X\vee Y)\leq 2\text{cat}_G(X\vee Y)-1$. Then, by Theorem~\ref{principal}, we conclude that \[\text{TC}_G(X\vee Y)\leq 2\text{cat}_G(X)-1\] (using the hypotheses $\text{cat}_G(X)\geq \text{cat}_G(Y)$).  
  
  \begin{enumerate}
      \item[(1)] Since $\text{TC}_G(X)=2\text{cat}_G(X)-1$, by Theorem~\ref{thm:lower-bound-tc-wedge}, we obtain \[\text{TC}_G(X\vee Y)=\max\{\text{TC}_G(X),\text{TC}_G(Y),\text{cat}_G(X\times Y)\}=\text{TC}_G(X).\] Furthermore, the equality $\text{TC}_G(X\vee Y)=2\text{cat}_G(X\vee Y)-1$ holds.
      \item[(2)] Since $\text{TC}^G(X)=2\text{cat}_G(X)-1$, by Theorem~\ref{thm:lower-bound-tc-wedge}, we obtain \[\text{TC}^G(X\vee Y)=\max\{\text{TC}^G(X),\text{TC}^G(Y),_{X\vee Y}\text{cat}_{G\times G}(X\times Y)\}=\text{TC}^G(X).\] Furthermore, the equality $\text{TC}^G(X\vee Y)=2\text{cat}_G(X\vee Y)-1$ holds.
  \end{enumerate}
\end{proof}

Theorem~\ref{prop:tc-2cat-1} implies the following proposition.

\begin{proposition}\label{prop:equiv-inv-tc-wedges-x-x}\rm{
Let $X$ be a $G$-connected $G$-CW-complex such that $X^G\neq\varnothing$, and $k\geq 1$ be an integer.
 \begin{enumerate}
     \item[(1)] If $\text{TC}_G(X)=2\text{cat}_G(X)-1$, then \[\text{TC}_G(\underbrace{X\vee\cdots\vee X}_{k \text{ times }})=\text{TC}_G(X).\] 
      \item[(2)] If $\text{TC}^G(X)=2\text{cat}_G(X)-1$, then \[\text{TC}^G(\underbrace{X\vee\cdots\vee X}_{k \text{ times }})=\text{TC}^G(X).\] 
 \end{enumerate} }       
\end{proposition}

We present the following example. For completeness, let us first recall that for $n\geq 2$, $S^n$ is the $n$-sphere with the group $\mathbb{Z}_2$ acting by reflection, given by multiplication by $-1$ in the last coordinate. We have $\left(S^{n}\right)^{\mathbb{Z}_2}=S^{n-1}$, and $\text{TC}_{\mathbb{Z}_2}(S^n)=2\text{cat}_{\mathbb{Z}_2}(S^n)-1=3$, as shown in \cite[Example 5.9, p. 2311]{colman2013equivariant}. On the other hand, \[\text{TC}^{\mathbb{Z}_2}(S^n)=\begin{cases}
       2,& \hbox{ if $n$ is even,}\\
       3,& \hbox{ if $n$ is odd,}\\
   \end{cases}\] as shown in \cite[Example 4.2, p. 115]{lubawski2015}. Also, given integers $n_1,\ldots,n_k\geq 1$ with $k\geq 2$, we have  $\text{TC}\left(S^{n_1}\vee\cdots\vee S^{n_k}\right)=3$, as shown in \cite[Proposição 4.7.50, p. 176]{zapata2022}.

\begin{example}\label{exam:sphere}\rm{
Let $n_1,\ldots,n_k\geq 2$ be integers. \begin{enumerate}
    \item[(1)] By Theorem~\ref{prop:tc-2cat-1}(1), we obtain \[\text{TC}_{\mathbb{Z}_2}\left(S^{n_1}\vee\cdots\vee S^{n_k}\right)=3.\] 
    \item[(2)] Suppose that $k\geq 2$. By Lemma~\ref{lem:equi-inv-prop}(3), \[\text{TC}^{\mathbb{Z}_2}\left(S^{n_1}\vee\cdots\vee S^{n_k}\right)\geq \text{TC}\left(S^{n_1-1}\vee\cdots\vee S^{n_k-1}\right)=3\] (using the fact that $(X\vee Y)^G=X^G\vee Y^G$). On the other hand, by Lemma~\ref{lem:equi-inv-prop}(2) together with Theorem~\ref{principal}, we obtain \[\text{TC}^{\mathbb{Z}_2}\left(S^{n_1}\vee\cdots\vee S^{n_k}\right)\leq 2\text{cat}_{\mathbb{Z}_2}\left(S^{n_1}\vee\cdots\vee S^{n_k}\right)-1=3.\] Therefore, $\text{TC}^{\mathbb{Z}_2}\left(S^{n_1}\vee\cdots\vee S^{n_k}\right)=3$. 
\end{enumerate}}
\end{example}

\begin{example}\label{exam:x-sphere}\rm{
Let $X$ be a $\mathbb{Z}_2$-connected $\mathbb{Z}_2$-CW-complex such that $X^{\mathbb{Z}_2}\neq\varnothing$ and $\text{cat}_{\mathbb{Z}_2}(X)\geq 2$. Recall that $2=\text{cat}_{\mathbb{Z}_2}(S^n)$.
\begin{enumerate}
    \item[(1)] Suppose that $\text{TC}_{\mathbb{Z}_2}(X)=2\text{cat}_{\mathbb{Z}_2}(X)-1$. Then, by Theorem~\ref{prop:tc-2cat-1}(1), we obtain \[\text{TC}_{\mathbb{Z}_2}(X\vee S^n)=\max\{\text{TC}_{\mathbb{Z}_2}(X),\text{TC}_{\mathbb{Z}_2}(S^n),\text{cat}_{\mathbb{Z}_2}(X\times S^n)\}=\text{TC}_{\mathbb{Z}_2}(X).\]  
     \item[(2)] Suppose that $\text{TC}^{\mathbb{Z}_2}(X)=2\text{cat}_{\mathbb{Z}_2}(X)-1$. Then, by Theorem~\ref{prop:tc-2cat-1}(2), we obtain \[\text{TC}^{\mathbb{Z}_2}(X\vee S^n)=\max\{\text{TC}^{\mathbb{Z}_2}(X),\text{TC}^{\mathbb{Z}_2}(S^n),_{X\vee S^n}\text{cat}_{\mathbb{Z}_2\times \mathbb{Z}_2}(X\times S^n)\}=\text{TC}^{\mathbb{Z}_2}(X).\]  
\end{enumerate}}   
\end{example}

From the first part of Theorem~\ref{prop:equiv-quotient} together with Theorem~\ref{prop:tc-2cat-1}, we have the following example. Note that,  given a pair $(X,A)$ of $G$-CW-complexes, the inclusion map $A\hookrightarrow X$ is a $G$-cofibration. 

\begin{example}\label{exam:equiv-inv-x-a}\rm{
 Let $(X,A)$ be a pair of $G$-CW-complexes. If the inclusion map $i:A\hookrightarrow X$ is $G$-homotopic to a constant map $\overline{x_0}:A\to X$, for some $x_0\in X^G$, then, by Theorem~\ref{prop:equiv-quotient}, we have that $X/A$ is $G$-homotopic to the wedge $X\vee S A$, where $A\times\{0\}$ is the basepoint of $S A$ and $x_0$ is the base point of $X$. Hence, by Lemma~\ref{lem:equi-inv-prop}(1), we have $\text{TC}_G(X/A)=\text{TC}_G(X\vee S A)$, and $\text{TC}^G(X/A)=\text{TC}^G(X\vee S A)$. In addition, suppose that $X$ and $S A$ are $G$-connected such that $\text{cat}_G(X)\geq 2$. Recall that $\text{cat}_G(S A)\leq 2$ (see Example~\ref{exam:1-2}). By Theorem~\ref{prop:tc-2cat-1}, we obtain the following equalities:
 \begin{enumerate}
     \item[(1)] If $\text{TC}_G(X)=2\text{cat}_G(X)-1$, then \[\text{TC}_G(X/A)=\text{TC}_G(X).\] 
     \item[(2)] If $\text{TC}^G(X)=2\text{cat}_G(X)-1$, then \[\text{TC}^G(X/A)=\text{TC}^G(X).\] 
 \end{enumerate} }   
\end{example}

From the proof of Theorem~\ref{thm:equiv-smash}, we obtain the following example.

\begin{example}\label{exam:equiv-inv-tc-smash}\rm{
    Let $X$ be a pointed $G$-space, and let $S^k$ be the $k$-dimensional sphere ($k\geq 0$) with the trivial action of $G$. By the first part of the proof of Theorem~\ref{thm:equiv-smash}, there exists a $G$-equivariant homeomorphism between $X\wedge S^k$ and the iterated reduced suspension $\Sigma^k X$ for each $k\geq 0$. Here $\Sigma^0 X=X$. Hence, by Lemma~\ref{lem:equi-inv-prop}(1), we have \[\text{TC}_G(X\wedge S^k)=\text{TC}_G(\Sigma^k X) \quad \text{ and } \quad \text{TC}^G(X\wedge S^k)=\text{TC}^G(\Sigma^k X).\] Recall that  $\text{cat}_G(\Sigma^k X)\leq 2$ for any $k\geq 1$. Additionally, suppose that $X$ is a $G$-CW-complex. By Lemma~\ref{lem:equi-inv-prop}(2), we have \[\max\{\text{TC}_G(X\wedge S^k),\text{TC}^G(X\wedge S^k)\}\leq 3,\] for any $k\geq 1$. } 
\end{example}

We end this article with the following remark.

\begin{remark}\label{rem:conj}
    Motivated by Theorem~\ref{thm:lower-bound-tc-wedge} and Theorem~\ref{prop:tc-2cat-1}, we conjecture that the following statement holds: Given $G$-connected $G$-CW-complexes $X$ and $Y$ with $X^G\neq\varnothing$ and $Y^G\neq\varnothing$, then: 
    \begin{align*}
     \text{TC}_G(X\vee Y)&=\max\{\text{TC}_G(X),\text{TC}_G(Y),\text{cat}_G(X\times Y)\}, \\ 
     \text{TC}^G(X\vee Y)&=\max\{\text{TC}^G(X),\text{TC}^G(Y),_{X\vee Y}\text{cat}_{G\times G}(X\times Y)\}.
    \end{align*} 
\end{remark}

This conjecture, in the nonequivariant case, was proved in \cite[Theorem 6, p. 409]{dranishnikov2019} under more restrictive hypotheses.

\bibliographystyle{plain}

\end{document}